\newcommand{\ri}{\mathrm{i}}
\newcommand{\re}{\mathrm{e}}
\renewcommand*\libertine@figurestyle{LF}
\setlist[enumerate,1]{label=(\roman*),itemsep=0.9ex}
\setlist[itemize]{itemsep=0.9ex}
\renewcommand{\define}[1]{\textit{#1}}
\newcommand{\mc}{\mathcal}
\newcommand{\mr}{\mathrm}
\newcommand{\mf}{\mathfrak}
\newcommand{\Z}{\mathbb{Z}}
\renewcommand{\phi}{\varphi}
\DeclareMathOperator{\hhh}{H}
\newcommand*\circleed[1]{\tikz[baseline=(char.base)]{
		\node[shape=circle,draw,inner sep=1pt] (char) {#1};}}
\newcommand{\qbinom}{\genfrac{[}{]}{0pt}{}}
\newcommand{\sstyle}{\scriptstyle}
\crefname{equation}{Eq.}{Eqs.}
\crefname{eqnarray}{Eq.}{Eqs.}
\crefname{algo}{Algorithm}{Algorithms}
\crefname{conj}{Conjecture}{Conjectures}
\crefname{lem}{Lemma}{Lemmas}
\crefname{thm}{Theorem}{Theorems}
\crefname{claim}{Claim}{Claims}
\crefname{rmk}{Remark}{Remarks}
\crefname{prop}{Proposition}{Propositions}
\crefname{section}{Section}{Sections}
\crefname{appendix}{Appendix}{Appendices}
\crefname{cor}{Corollary}{Corollaries}
\crefname{figure}{Figure}{Figures}
\crefname{table}{Table}{Tables}
\crefname{example}{Example}{Examples}
\crefname{prob}{Problem}{Problems}
\crefname{assm}{Assumption}{Assumptions}
\crefname{defn}{Definition}{Definitions}
\newcommand{\cO}{\mathcal{O}}
\newcommand{\cM}{\mathcal M}
\newcommand{\bbO}{\mathbb{O}}
\newcommand{\bbZ}{\mathbb{Z}}
\newcommand{\bbR}{\mathbb{R}}
\newcommand{\bbC}{\mathbb{C}}
\newcommand{\bbP}{\mathbb{P}}
\renewcommand{\l}{\left}
\renewcommand{\r}{\right}
\def\beq{\begin{equation}}                     %
\def\eeq{\end{equation}}                       %
\def\bea{\begin{eqnarray}}                     
\def\eea{\end{eqnarray}}
\def\bary{\begin{array}} 
\def\eary{\end{array}} 
\def\ben{\begin{enumerate}} 
\def\een{\end{enumerate}}
\def\bit{\begin{itemize}} 
\def\eit{\end{itemize}}
\def\nn{\nonumber}
\theoremstyle{plain}
\newtheorem{thm}{Theorem}[section]
\newtheorem{lem}[thm]{Lemma}
\newtheorem{prop}[thm]{Proposition}
\newtheorem{conj}[thm]{Conjecture}
\newtheorem*{conj*}{Conjecture}
\newtheorem{cor}[thm]{Corollary}
\newtheorem*{cor*}{Corollary}
\theoremstyle{definition}
\newtheorem{rmk}[thm]{Remark}
\title{On quasi-tame Looijenga pairs}
\author{Andrea Brini}
\address{A.B.: University of Sheffield, School of Mathematics and Statistics, S3 7RH, Sheffield, United Kingdom\\
on leave from CNRS, DR~13, Montpellier, France}
\email{a.brini@sheffield.ac.uk}
\author{Yannik Schuler}
\address{Y.S.: University of Sheffield, School of Mathematics and Statistics, S3 7RH, Sheffield, United Kingdom}
\email{yschuler1@sheffield.ac.uk}
\begin{document}

\begin{abstract}
    We prove a conjecture of Bousseau, van Garrel and the first-named author relating, under suitable positivity conditions, the higher genus maximal contact log~Gromov--Witten invariants of Looijenga pairs to other curve counting invariants of Gromov--Witten/Gopakumar--Vafa type. The proof consists of a closed-form $q$-hypergeometric resummation of the quantum tropical vertex calculation of the log~invariants in presence of infinite scattering. The resulting identity of $q$-series appears to be new and of independent combinatorial interest. 
    
\end{abstract}

	\maketitle
	
\section{Introduction}
	
	\subsection{Quasi-tame Looijenga pairs}
	
	A Looijenga pair $Y(D) \coloneqq (Y,D)$ is the datum of a smooth rational complex projective surface $Y$ and an anticanonical singular curve $D \in |-K_Y|$. A Looijenga pair $Y(D)$ is called {\it nef} if the singular curve $D$ is a
	simple normal crossings divisor $D=\cup_{i=1}^l D_i$ with each $D_i$ smooth, irreducible, and nef\footnote{Since we require $D$ to be singular, an $l$-component nef Looijenga pair must have $l>1$.} for all $i=1, \dots, l$. A {\it tame} Looijenga pair is a nef pair with either $l>2$, or $D_i^2>0$ for all $i$. Writing $E_{Y(D)} \coloneqq \mathrm{Tot}(\oplus_i(\cO_Y(-D_i))$, we will say that a nef pair $Y(D)$ is {\it quasi-tame} if there exists a tame pair $Y'(D')$ such that $E_{Y(D)}$ is deformation-equivalent to $E_{Y'(D')}$. By definition, there is an obvious sequence of nested  inclusions

$$\hbox{nef Looijenga pairs} \supset \hbox{quasi-tame Looijenga pairs} \supset \hbox{tame Looijenga pairs}~. $$
\medskip

Looijenga pairs have been the focus of much attention lately due to their intertwined role in 
mirror symmetry for surfaces \cites{GHKlog, MR3518552, MR3563248, yu2016enumeration,barrott2020explicit,MR4048291, hacking2020homological, Bardwell-Evans:2021tsz} and the study of cluster varieties \cites{GHKclu, MR3954363, zhou2019weyl}.
In a recent series of papers \cite{Bousseau:2019bwv, Bousseau:2020fus, Bousseau:2020ryp}, the log~Gromov--Witten theory of quasi-tame pairs was further conjectured to be at the centre of a web of correspondences relating it to several enumerative theories. We recall the relevant context and fix notation below. 

\subsection{Enumerative theories} 

The authors of \cite{Bousseau:2020fus} consider four different geometries, and associated enumerative invariants, attached to the datum of a quasi-tame Looijenga pair $Y(D)$:

\ben  
\item 
 the log Calabi--Yau surface obtained by viewing $Y(D)$ as a log-scheme for the divisorial log structure induced by $D$. For a given genus $g$ and effective curve class $d\in\hhh_2(Y,\bbZ)$ with $d\cdot D_i>0$ for all $i=1,\ldots,l$, the corresponding set of invariants are the log Gromov--Witten invariants \cites{Chen14,AbramChen14, GS13} of $Y(D)$ with maximal tangency at each component $D_i$, $l-1$ point insertions on the surface, and one insertion of the top Chern class of the Hodge bundle:
 \beq
 N^{\rm log}_{g,d}(Y(D)) \coloneqq \int_{[\cM^{\rm log}_{g,l-1}(Y(D),d)]^{\rm vir}} \prod_{i=1}^{l-1} \mathrm{ev}_i^* [\mathrm{pt}_Y] (-1)^g \lambda_g,
 \label{eq:loggw}
 \eeq 
or equivalently, their all-genus generating function
\beq
\mathbb{N}^{\rm log}_d(Y(D))(\hbar) \coloneqq
{\left( 
2 \sin \left( \frac{\hbar}{2} \right) \right)^{2-l}}
\sum_{g \geqslant 0} N^{\rm log}_{g,d} (Y(D)) \hbar^{2g-2+l}\,;
\label{eq:GWexp_intro}
\eeq
\item the quasi-projective Calabi--Yau variety $E_{Y(D)}$, and its genus zero local Gromov--Witten invariants \cite{Chiang:1999tz,MR2276766}
\beq 
N_d(E_{Y(D)}) \coloneqq  \int_{[\cM_{0,l-1}(E_{Y(D)},d)]^{\rm vir}} \prod_{i=1}^{l-1} \mathrm{ev}_i^* [\mathrm{pt}_Y] 
\eeq 
and local Gopakumar--Vafa invariants \cite{Klemm:2007in,MR3739228}
\beq 
\mathrm{GV}_{d}(E_{Y(D)}) \coloneqq \sum_{k | d} \frac{\mu(k)}{k^{4-l}} N_{d}(E_{Y(D)})
\eeq 
where $\mu$ is the M\"{o}bius function;
\item the quasi-projective Calabi--Yau threefold $ \mathrm{Tot}(\cO_{Y \setminus \cup_{i<l} D_i}(-D_l))$ equipped with a disjoint union of $l-1$ Lagrangians $L_i$ fibred over real curves in $D_i$, $i<l$, as defined in \cite[Construction~6.4]{Bousseau:2020fus}: 
    \[
    Y^{\rm op}(D) \coloneqq  \begin{pmatrix}{\rm Tot}\begin{pmatrix}\cO(-D_l)\to Y\setminus\left(D_1\cup\cdots\cup D_{l-1}\right)\end{pmatrix},L_1\sqcup\cdots\sqcup L_{l-1}\end{pmatrix},
    \]
The respective invariants, for a given relative homology degree $d\in \hhh_2(Y^{\rm op}(D), \bbZ)$, are the open Gromov--Witten counts
\bea
O_{g,d}(Y^{\rm op}(D)) &\coloneqq & \int_{[\cM_{g}(Y^{\rm op}(D),d)]^{\rm vir}} 1, \nn \\
\mathbb{O}_d(Y^{\rm op}(D))(\hbar) &\coloneqq & \sum_{g\geq 0} \hbar^{2g+l-3} O_{g,d}(Y^{\rm op}(D)) 
\label{eq:opengw}
\eea 
virtually enumerating genus-$g$ open Riemann surfaces with $l-1$ connected components of the boundary ending on the Lagrangians $L_i$, $i=1, \dots l-1$. Under relatively lax conditions\footnote{This was formalised as ``Property~O'' in \cite[Definition~6.3]{Bousseau:2020fus}: 
this is equivalent to requiring that $E_{Y(D)}$ is deformation-equivalent to  $E_{Y'(D')}$ with $Y'$ a toric weak Fano surface, $D'_i$ a prime toric divisor for $i<l$, and $D'_l$ nef. 
All quasi-tame pairs with $l=2$ satisfy Property~O, and all non-tame quasi-tame pairs have $l=2$,
so this is safely assumed to hold throughout this paper.}, $Y^{\rm op}(D)$ can be deformed to a singular Harvey--Lawson (Aganagic--Vafa) Lagrangian pair with $L_i \simeq \bbR^2 \times S^1$, for which open GW counts can be defined in the algebraic category \cite{Katz:2001vm, Li:2001sg} (see also \cite{Li:2004uf,Fang:2011qd}). Denoting by $w_i(d)$  the winding number of a relative degree-$d$ open stable map to $Y^{\rm op}(D)$ around the non-trivial homology circle in $L_i$, we will also consider the corresponding genus zero/all-genus  Labastida--Mari\~no--Ooguri--Vafa invariants \cite{Ooguri:1999bv, Labastida:2000yw, Labastida:2000zp}
\bea
\mathrm{LMOV}_{0,d}(Y^{\rm op}(D)) &=& \sum_{k | d} \frac{\mu(k)}{k^{4-l}} {O}_{0,d/k}(Y^{\rm op}(D)), \nn \\
\mathbb{LMOV}_{d}(Y^{\rm op}(D))(\hbar) &=& [1]_q^2 \prod_{i<l} \frac{w_i(d)}{[w_i(d)]_q} \sum_{k | d} \frac{\mu(k)}{k} \mathbb{O}_{d/k}(Y^{\rm op}(D))(k \hbar),
\eea 
where $[n]_q \coloneqq q^{n/2}-q^{-n/2}$, and $q=\re^{\ri\hbar}$;
\item 
for $l=2$, a symmetric quiver $\mathsf{Q}(Y(D))$ with adjacency matrix determined by $Y(D)$ \cite[Thm.~7.3]{Bousseau:2020fus}. For a given charge vector~$d$, the corresponding numbers are the numerical Donaldson--Thomas invariants $\mathrm{DT}_d^{\rm num}(\mathsf{Q}(Y(D)))$, defined as the formal Taylor coefficients of (the plethystic logarithm of) the generating series of Euler characteristics on the stack of representations of $\mathsf{Q}(Y(D))$. 
\een 

The constructions of \cite{Bousseau:2020fus} in particular identify the absolute homology of $Y(D)$, the relative homology of $Y^{\rm op}(D)$, and the free abelian group over the set of vertices of $\mathsf{Q}(Y(D))$,
$$ d \in \hhh_2(Y(D), \bbZ) \simeq \hhh_2(E_{Y(D)}, \bbZ) \simeq \hhh^{\rm rel}_2(Y^{\rm op}(D), \bbZ) \simeq \bbZ^{|(\mathsf{Q}(Y(D)))_0|}.$$ Under these identifications, the authors of \cite{Bousseau:2020fus} propose that the invariants above are essentially the same, as follows.


\begin{conj}[The genus zero log/local/open correspondence, \cite{Bousseau:2020fus,vGGR,Liu:2021eeb}]
The genus zero log, local, and open Gromov--Witten invariants associated to a quasi-tame Looijenga pair $Y(D)$ are related as 
\beq 
 N_{d}(E_{Y(D)})= O_{0,d}(Y^{\rm op}(D)) =
 \left( \prod_{j=1}^l \frac{(-1)^{d \cdot D_j -1}}{d \cdot D_j} \right) N_{0,d}^{\rm log}(Y(D)),
\eeq 
and, for the associated BPS invariants,
\beq 
 \mathrm{GV}_{d}(E_{Y(D)})= \mathrm{LMOV}_{0,d}(Y^{\rm op}(D)) \in \bbZ.
\eeq 
Moreover, if $l=2$, 
\beq 
\mathrm{DT}_d^{\rm num}(\mathsf{Q}(Y(D)))=|\mathrm{GV}_{d}(E_{Y(D)})|.
\eeq 
\label{conj:gzero}
\end{conj}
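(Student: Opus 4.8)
The plan is to reduce every invariant in the statement to a single genus-zero \emph{local} Gromov--Witten computation on $E_{Y(D)}$, and then to recover the BPS and quiver incarnations by a Möbius-type multicover transform and a direct comparison of generating functions. The statement stratifies into three layers which I would treat in order, since each feeds the next: (i) the equality of the primary invariants $N_d$, $O_{0,d}$ and $N_{0,d}^{\rm log}$; (ii) the equality of the BPS invariants $\mathrm{GV}_d$ and $\mathrm{LMOV}_{0,d}$ together with their integrality; and (iii), for $l=2$, the identification with numerical Donaldson--Thomas invariants of the symmetric quiver.

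For (i), the identity $N_d(E_{Y(D)}) = \big(\prod_{j=1}^l \frac{(-1)^{d\cdot D_j-1}}{d\cdot D_j}\big) N_{0,d}^{\rm log}(Y(D))$ is precisely the multi-component genus-zero log-local correspondence: the van Garrel--Graber--Ruddat principle expresses maximal-tangency log invariants of $(Y,D_j)$ against a single smooth nef divisor in terms of the genus-zero local invariants of $\cO_Y(-D_j)$, and the product over the $l$ components arises by iterating this statement divisor by divisor. I would therefore invoke the genus-zero log-local correspondence for Looijenga pairs to obtain the last equality directly. For the middle equality $N_d(E_{Y(D)}) = O_{0,d}(Y^{\rm op}(D))$, I would run an open-closed duality argument: the threefold $Y^{\rm op}(D)$ is engineered so that algebraic open stable maps ending on the Aganagic--Vafa Lagrangians $L_i$ compute, via localisation on the open moduli, the same rational numbers as the local invariants, with the winding number $w_i(d)$ playing the role of the tangency order $d\cdot D_i$. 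The comparison is thus a fixed-point computation matching disk contributions to the corresponding tangency data.

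For (ii), both $\mathrm{GV}_d$ and $\mathrm{LMOV}_{0,d}$ are obtained by applying the same arithmetic multicover transform $\sum_{k\mid d}\tfrac{\mu(k)}{k^{4-l}}(\,\cdot\,)_{d/k}$ to the primary invariants $N_{d/k}$ and $O_{0,d/k}$ respectively; given the equality of primary invariants from (i), the identity $\mathrm{GV}_d(E_{Y(D)}) = \mathrm{LMOV}_{0,d}(Y^{\rm op}(D))$ is then formal. Integrality, $\mathrm{GV}_d\in\bbZ$, is the substantive point; I would deduce it either from Gopakumar--Vafa integrality for the local threefold (Ionel--Parker) or, more in the spirit of (iii), from positivity of the quiver invariants. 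For (iii), the symmetric quiver $\mathsf{Q}(Y(D))$ supplied by the construction recalled above is built so that the plethystic logarithm of its generating series of stacky Euler characteristics matches the BPS generating function of (ii). The numerical DT invariants of a symmetric quiver coincide with its cohomological/BPS invariants, which are known to be non-negative integers (Efimov; Meinhardt--Reineke), and this simultaneously yields $\mathrm{DT}_d^{\rm num} = |\mathrm{GV}_d|$ and the integrality asserted in (ii).

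The main obstacle I anticipate is the middle equality in (i): making the open-closed matching rigorous requires controlling the open moduli and the winding data precisely enough to identify open disk counts with the local invariants, and it is here that the quantum tropical-vertex/scattering description of the invariants---and its closed-form resummation---does the real work. The positivity hypotheses built into quasi-tameness and Property~O are exactly what guarantee that the scattering is effectively finite (or at least resummable) and that the quiver in (iii) is genuinely symmetric with non-negative invariants, so I would track these assumptions carefully at every step.
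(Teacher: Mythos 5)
This statement is not proved in the present paper at all: immediately after stating it, the authors record that it ``was proved in [Bousseau--Brini--van Garrel, Thm.~1.4--1.6]'' and the entire body of the paper is devoted to the \emph{higher genus} correspondence (\cref{conj:higherg}) for quasi-tame pairs. So there is no in-paper proof to measure your proposal against; the only honest comparison is with the cited prior work, whose strategy is visible through the citations here.

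Measured on its own terms, your sketch has one genuine gap at its core. You derive the equality $N_d(E_{Y(D)})=\bigl(\prod_j \tfrac{(-1)^{d\cdot D_j-1}}{d\cdot D_j}\bigr)N^{\rm log}_{0,d}(Y(D))$ by ``iterating the van Garrel--Graber--Ruddat principle divisor by divisor.'' That theorem concerns maximal tangency along a \emph{single} smooth nef divisor; there is no general principle that lets you iterate it over the components of a normal crossings anticanonical divisor to produce the product formula, and indeed this multi-component statement is precisely the nontrivial content of the conjecture (naive iteration is known to fail outside restrictive positivity hypotheses). In the cited work the equality is instead established by computing both sides in closed form --- the log side via scattering diagrams and tropical correspondence theorems (the same machinery \cref{prop:logGWfromQBL} recalls here), the local side via local mirror symmetry --- and comparing. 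The same criticism applies to your middle equality $N_d=O_{0,d}$: the ``localisation on the open moduli'' matching you invoke is not available at this level of generality; the cited proof computes $O_{0,d}$ explicitly with the topological vertex (cf.\ the closed form \eqref{eq:opendp302} quoted in this paper) and again compares answers. Your layers (ii) and (iii) are essentially correct as reductions: given (i), the GV/LMOV equality is formal from the common M\"obius transform, and integrality plus $\mathrm{DT}^{\rm num}_d=|\mathrm{GV}_d|$ do follow from the Efimov and Meinhardt--Reineke positivity results for symmetric quivers once the quiver's generating series is matched to the BPS one --- but that matching, too, rests on the explicit closed-form evaluations you have deferred.
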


In \cite[Conj.~1.3]{Bousseau:2020fus}, the above is further extended to an identity between all-genus Gromov--Witten generating functions.

\begin{conj}[The higher genus log/open correspondence]
The higher genus log and open Gromov--Witten invariants associated to a quasi-tame Looijenga pair $Y(D)$ are related as
\beq 
 \bbO_d(Y^{\rm op}(D))(\hbar) = \left( \prod_{j=1}^{l-1} \frac{(-1)^{d \cdot D_j -1}}{d \cdot D_j} \right) \frac{(-1)^{d \cdot D_l -1}}{[d \cdot D_l]_q} \mathbb{N}_d^{\rm log}(Y(D))(\hbar).
 \label{eq:logopengen}
\eeq 
Moreover, 
\beq 
\mathbb{LMOV}_d(Y^{\rm op}(D))(\hbar) =
 [1]^2_q
 \left( \prod_{i=1}^{l} \frac{ 1 }{[ d \cdot D_i]_{q}} \right)
 \sum_{k | d}
\frac{(-1)^{ d/k \cdot D + l}\mu(k)}{[k]_{q}^{2-l} \,  k^{2-l} \,} \,
\mathbb{N}_{d/k}^{\rm log}(Y(D))(k \hbar)
\in \bbZ[q, q^{-1}]\,.
\label{eq:lmovlogBPS}
\eeq 
\label{conj:higherg}
\end{conj}

\medskip

\cref{conj:gzero} was proved in \cite[Thm.~1.4--1.6]{Bousseau:2020fus}. \cref{conj:higherg} was proved in \cite[Thm.~1.5~and~1.7]{Bousseau:2020fus} for tame $Y(D)$, and formulated as a conjecture for quasi-tame $Y(D)$ in \cite[Conj.~4.8]{Bousseau:2020fus}. Two non-tame, quasi-tame cases of this conjecture were subsequently proved in \cite{Kra21}. \\

In this paper, we establish a stronger statement from which \cref{conj:higherg} follows for any quasi-tame Looijenga pair $Y(D)$. We will provide a closed-form calculation and comparison of both sides of \eqref{eq:logopengen}, returning the two equalities in \cref{conj:higherg} as a corollary. 	Since the tame setting was already dealt with in \cite{Bousseau:2020fus}, we may restrict our attention here to non-tame, quasi-tame pairs alone, for which $l=2$. 
	We give here a slightly discursive version of the main result of this paper (see \cref{prop:NlogdP302scat,prop:Gclosedform,prop:birinv} for precise statements, and e.g. \eqref{eq:opendp302} and \eqref{eq:NlogdP302closedform} for explicit formulas).
	
	\begin{thm}
	Let $Y(D)$ be a non-tame, quasi-tame Looijenga pair $Y(D)$ and $d\in \hhh_2(Y,\bbZ)$. The log and open higher genus generating functions 
	$\mathbb{N}^{\rm log}_d(Y(D))(\hbar)$ and $\mathbb{O}_d(Y^{\rm op}(D))(\hbar)$ 
	are rational functions of $q=\re^{\ri \hbar}$, with zeroes and poles only at $q=0$, $\infty$, or at roots of unity. Furthermore, \cref{conj:higherg} holds.
 \label{thm:introhigherg}
	\end{thm}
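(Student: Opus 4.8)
The plan is to compute each of the two generating functions in \eqref{eq:logopengen} in closed form and then to compare them, so that the analytic claim (rationality, with zeroes and poles only at $q=0,\infty$ and at roots of unity) and \cref{conj:higherg} both drop out. Since the tame case is settled in \cite{Bousseau:2020fus}, I restrict to $l=2$; here nefness gives $D_i^2\geqslant 0$, and non-tameness forces $D_i^2=0$ for at least one component, say $D_2$. Using the birational comparison of \cref{prop:birinv} together with the classification of quasi-tame pairs, it suffices to prove the theorem for a short, explicitly presented list of models. On the open side the generating function $\bbO_d(Y^{\rm op}(D))(\hbar)$ is computed in closed form by localisation on the toric Aganagic--Vafa geometry (equivalently, the topological vertex); the outcome is a finite ratio of quantum integers $[n]_q$ times a monomial in $q$, hence manifestly a rational function of $q$ whose only zeroes and poles sit at $q=0,\infty$ and at roots of unity.

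The substance of the argument is the log side. I would compute $\bbN^{\rm log}_d(Y(D))(\hbar)$ through the quantum tropical vertex: by Bousseau's correspondence, the maximal-contact $(-1)^g\lambda_g$-twisted log invariants of $Y(D)$ are read off from the wall functions of the quantum scattering diagram generated by the two initial walls attached to $D_1$ and $D_2$, with the commutators of the associated quantum-torus automorphisms governed by the intersection numbers $D_i\cdot D_j$ and $q=\re^{\ri\hbar}$ playing the role of the quantum parameter. The condition $D_2^2=0$ places this rank-two problem exactly at the boundary of the finite-type regime, so the scattering is \emph{infinite}: the two incoming walls generate an infinite fan of outgoing rays, and the wall function on the ray carrying $\bbN^{\rm log}_d$ is an infinite ordered product. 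Expanding this product and collecting the coefficient in degree $d$ yields, for each $d$, an explicit but \emph{a priori} infinite $q$-series assembled from quantum integers; this is the computation I would package as \cref{prop:NlogdP302scat}.

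The main obstacle is to evaluate this infinite-scattering $q$-series in closed form, i.e.\ to carry out the $q$-hypergeometric resummation recorded in \cref{prop:Gclosedform}. I expect the relevant building-block series $G$ to be a $q$-hypergeometric sum whose summand is a ratio of $q$-Pochhammer symbols, and the heart of the matter is to show that it collapses to a finite ratio of products of $[n]_q$'s. Since the identity is new, I would not hope to quote it from the classical corpus; rather I would prove it by exhibiting a first-order $q$-difference equation (in an auxiliary variable, or in the degree) satisfied by both sides, for instance via $q$-creative telescoping ($q$-Zeilberger) together with matching of initial data, or by recognising the sum after a change of variables as a specialisation of a $q$-Gauss or bilateral ${}_1\psi_1$ summation. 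Granting the closed form, the remaining analytic assertions are immediate: $[n]_q=q^{n/2}-q^{-n/2}$ vanishes only at $n$-th roots of unity, so a finite ratio of such factors, times a power of $q$, is precisely a rational function of $q$ with zeroes and poles confined to $q=0,\infty$ and roots of unity.

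It then remains to match the two closed forms and to extract integrality. Comparing $\bbN^{\rm log}_d$ with $\bbO_d$ through the prefactor $\tfrac{(-1)^{d\cdot D_1-1}}{d\cdot D_1}\,\tfrac{(-1)^{d\cdot D_2-1}}{[d\cdot D_2]_q}$ of \eqref{eq:logopengen} is a direct verification, the winding numbers $w_i(d)$ on the open side being read off from the same tropical/toric data that drive the scattering; this establishes the first identity of \cref{conj:higherg}. The LMOV identity follows by applying the Möbius resummation $\sum_{k\mid d}\mu(k)(\cdots)$, with the rescalings $q\mapsto q^k$, to the log/open relation just proved, and checking that the result lands in $\bbZ[q,q^{-1}]$; with the explicit product form in hand this reduces to verifying that the quantum integers $[d\cdot D_i]_q$ in the denominator cancel against those generated by the resummation, leaving a genuine Laurent polynomial. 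I expect the book-keeping of the signs $(-1)^{d\cdot D_i-1}$ and of the $k$-dependent rescalings to be the only fiddly point of this last stage, the conceptual difficulty being entirely concentrated in the resummation of \cref{prop:Gclosedform}.
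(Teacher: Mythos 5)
Your overall architecture agrees with the paper's: reduce via \cref{prop:birinv} and the classification of non-tame, quasi-tame pairs to an explicit model (the paper needs exactly one, $\mathrm{dP}_3(0,2)$, the unique such pair of maximal Picard number), quote the open-side answer from the topological-vertex computation of \cite{Bousseau:2020fus}, compute the log side from the quantum scattering diagram with infinite scattering, and then resum. The gap is at the resummation, which is the entire substance of the paper. The output of the broken-line computation, \eqref{eq:NlogdP302scat}, is not a one-variable $q$-hypergeometric series: it is a multi-sum over the variables $k_{i,n}$, $(i,n)\in\{1,2,3,4\}\times\Z_{>0}$ (one $4$-tuple per layer of walls produced by the infinite scattering), subject to four linear constraints, with $q$-binomial coefficients whose arguments couple every layer $m>n$ to layer $n$. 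The tools you name --- a single first-order $q$-difference equation found by $q$-creative telescoping, or recognition as a specialisation of $q$-Gauss or a bilateral ${}_1\psi_1$ --- are adapted to single sums or multi-sums with a bounded number of variables and do not engage with this structure; the authors explicitly report that the undeformed identity is too rigid to be attacked directly.

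What actually closes the argument, and is absent from your proposal, is the parametric refinement: one replaces $d_2+d_3$ in the tops of the $q$-binomials by a free fifth parameter $e$, defining $G(a,b,c,d,e)$ as in \eqref{eq:Gdef}, and peels off the $n=1$ layer of walls to obtain the recursion \eqref{eq:Grec} in $a$ (\cref{lem:Grec}). The induction step is completed by showing that the candidate $\tilde{G}$ of \eqref{eq:Gtildedef} satisfies the same recursion, which requires four successive applications of Jackson's $q$-analogue of the Pfaff--Saalsch\"utz summation to collapse the quadruple sum over $k_{1},\dots,k_{4}$. Note moreover that the closed form \eqref{eq:Gclosedform} of the deformed sum is a \emph{difference} $\tilde{G}(a,b,c,d,e)-\tilde{G}(a-1,b,c,d,e-2)$ of two products of $q$-binomials, and only the specialisation \eqref{eq:Nlogred} yields the single product \eqref{eq:NlogdP302closedform}; your expectation of a single finite ratio of quantum integers pinned down by one first-order equation plus initial data would miss this. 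Finally, for the LMOV integrality you propose a direct cancellation check after M\"obius inversion; this is itself a nontrivial arithmetic statement, and the paper instead observes that $\bbO_d(\mathrm{dP}^{\rm op}_3(0,2))=\bbO_d(\mathrm{dP}^{\rm op}_3(1,1))$ and imports the proof of \cite[Thm.~8.1]{Bousseau:2020fus} verbatim.
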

	
		\subsection{Strategy of the proof}
		
Our task is  simplified by a number of circumstances, which 
reduce \cref{thm:introhigherg} to the computation of
{\it one single example}. As explained in \cite{Fri16} and \cite[Prop.~2.2]{Bousseau:2020fus}, $Y(D)$ is fully determined by the self-intersections $(D_1^2, D_2^2)$, and when considering specific examples we will shorten notation by writing $Y(D^2_1,D_2^2)$ for $Y(D)$. 
Let $\pi:\mathrm{dP}_r \to \bbP^2$ be the blow-up of the plane at $r\geq 0$ points $\{P_1, \dots, P_r\}$, and write $H \coloneqq \pi^*c_1(O_{\bbP^2}(1))$, $E_i\coloneqq [\pi^{-1}(P_i)]$. Up to deformation and permutation of ${D_1, D_2}$ and ${E_1, \dots, E_r}$, there is a unique non-tame, quasi-tame pair of maximal Picard number given by $Y=\mathrm{dP}_3$ and $D_1=H-E_1$, $D_2=2H-E_2-E_3$, so that $(D_1^2, D_2^2)=(0,2)$. \\

By \cite[Prop~2.2]{Bousseau:2020fus}, any other non-tame, quasi-tame pair $Y(D)$ is the result of a toric contraction $\pi':\mathrm{dP}_3(0,2) \to Y(D)$. Therefore, as we recall in \cref{prop:birinv} below, the blow-up formulas for log and open invariants \cite[Prop.~4.3 and 6.9] {Bousseau:2020fus} imply that it suffices to check that \cref{conj:higherg} holds for the single case $Y(D)=\mathrm{dP}_3(0,2)$. 
The l.h.s. of \eqref{eq:logopengen} in that case was computed in \cite[Sec.~6.3.1]{Bousseau:2020fus} to be 
\beq 
\bbO_d\l(\mathrm{dP}^{\rm op}_3(0,2)\r) =
(-1)^{d_1+d_2+d_3} \frac{[d_1]_q}{d_1 [d_0]_q [d_1+d_2+d_3-d_0]_q}\qbinom{d_3}{d_0-d_1}_q 
\qbinom{d_3}{d_0-d_2}_q
\qbinom{d_0}{d_3}_q
\qbinom{d_1+d_2+d_3-d_0}{d_3}_q,
\label{eq:opendp302}
\eeq 
where we decomposed $d=d_0 (H-E_1-E_2-E_3)+d_1 E_1+d_2 E_2+d_3 E_3$ in terms of generators $\{H-E_1-E_2-E_3, E_1,E_2,E_3\}$ of
$\hhh_2(\mathrm{dP}_3,\bbZ)$, and for non-negative integers $n$, $m$ we denoted\footnote{By \cite[Prop.~2.5]{Bousseau:2020fus}, the effectiveness of $d$  implies that all arguments of the $q$-binomial expressions in \eqref{eq:opendp302} are non-negative integers.} 
\beq 
[n]_q!\coloneqq \prod_{i=1}^{n}[i]_q, \quad 
			\qbinom{n}{m}_q \coloneqq \begin{cases}
				\frac{[n]_q!}{[m]_q! \, [n-m]_q!} & 0\leq m \leq n, \\
				0 & \text{otherwise}. 
			\end{cases}
		\end{equation}
The first equality \eqref{eq:logopengen} in \cref{conj:higherg} is then a consequence of the following

\begin{prop}[=\cref{prop:NlogdP302scat,prop:Gclosedform}]
With the conventions above, we have
\begin{equation}
			\mathbb{N}^{\rm log}_d\big(\mathrm{dP}_3(0,2)\big)(\hbar) = \frac{[d_1]_q [d_2+d_3]_q}{[d_0]_q [d_1+d_2+d_3-d_0]_q}\qbinom{d_3}{d_0-d_1}_q 
\qbinom{d_3}{d_0-d_2}_q
\qbinom{d_0}{d_3}_q
\qbinom{d_1+d_2+d_3-d_0}{d_3}_q \,.
			\label{eq:NlogdP302closedform}
	\eeq 
	\label{prop:NlogdP302closedform}
\end{prop}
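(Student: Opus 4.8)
The plan is to compute the left-hand side $\mathbb{N}^{\rm log}_d(\mathrm{dP}_3(0,2))(\hbar)$ directly through the quantum tropical vertex and then resum the resulting $q$-series into the advertised closed form; the match with the known open generating function \eqref{eq:opendp302} is performed only afterwards, as a corollary. First I would set up the quantum scattering diagram attached to the two boundary divisors of $\mathrm{dP}_3(0,2)$, with two incoming walls carrying $q$-deformed wall-crossing functions whose initial data are dictated by the self-intersections $(D_1^2,D_2^2)=(0,2)$. By the log--tropical correspondence for higher-genus maximal-contact invariants (with the $\lambda_g$ insertion encoded by the passage to $q=\re^{\ri\hbar}$), the generating function $\mathbb{N}^{\rm log}_d$ is read off as a coefficient of the logarithm of the consistent wall-crossing function along the outgoing diagonal ray labelled by $d=d_0(H-E_1-E_2-E_3)+d_1E_1+d_2E_2+d_3E_3$.

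The decisive structural feature is that $D_1^2=0$ forces the pair to be non-tame, so the consistency completion of the diagram is of \emph{infinite} type: infinitely many rays are produced before the outgoing diagonal wall stabilises. Computing that wall's function therefore reduces, after ordering the quantum dilogarithm factors and tracking their iterated commutators in the quantum-torus algebra, to an explicit but a priori infinite, nested $q$-hypergeometric multisum in the variables $(d_0,d_1,d_2,d_3)$. Isolating this sum and presenting $\mathbb{N}^{\rm log}_d$ in that form is exactly the content I would package as \cref{prop:NlogdP302scat}.

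The second, and genuinely hard, step is the $q$-hypergeometric resummation (\cref{prop:Gclosedform}): I would show that the nested sum produced by the infinite scattering telescopes to the prefactor $\tfrac{[d_1]_q[d_2+d_3]_q}{[d_0]_q[d_1+d_2+d_3-d_0]_q}$ times the product of four $q$-binomials in \eqref{eq:NlogdP302closedform}. The strategy is to recognise the innermost summations as terminating, balanced $q$-hypergeometric series and to collapse them by repeated application of the $q$-Chu--Vandermonde and Pfaff--Saalschütz summations, peeling off one $q$-binomial at each stage; the effectiveness constraints on $d$ (which, by the footnote after \eqref{eq:opendp302}, guarantee that every $q$-binomial argument is a nonnegative integer) ensure termination and control the range of summation. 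Combining \cref{prop:NlogdP302scat,prop:Gclosedform} then yields the closed form, and comparison with \eqref{eq:opendp302} gives the first equality of \cref{conj:higherg}.

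The main obstacle is precisely this resummation: the infinite scattering yields a multiply-nested $q$-sum with no manifest hypergeometric closure, and the crux is to uncover the correct iterated-summation structure that makes it collapse to four $q$-binomials. I expect that the identity certifying the final collapse is not a direct specialisation of a classical summation theorem but a new $q$-series identity — the one flagged as being of independent combinatorial interest — so the real work is to formulate and prove that identity (for instance via a creative-telescoping/$q$-WZ certificate, or by a bijective reading of both sides as weighted lattice-path counts) rather than to assemble the geometric input.
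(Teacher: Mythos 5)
Your two-step architecture (quantum scattering multisum, then $q$-hypergeometric resummation) matches the paper's, but there are two issues. The lesser one: for these invariants, which carry a point insertion, the correct tropical correspondence is not ``a coefficient of the logarithm of the consistent wall-crossing function along the outgoing ray'' but the sum over \emph{pairs of quantum broken lines} meeting at a common point with opposite end-exponents, as in \cref{prop:logGWfromQBL}; the log-of-wall-function formulation computes a different flavour of invariant. Tracking the broken lines through the infinitely many $4$-tuples of walls produced by the infinite scattering is what yields the explicit constrained multisum \eqref{eq:NlogdP302scat}, so this is a substantive, not cosmetic, correction to your setup.

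The serious gap is in the resummation, which you correctly identify as the crux but do not actually carry out. Your primary plan --- collapsing the innermost sums one at a time by $q$-Chu--Vandermonde/Pfaff--Saalsch\"utz, ``peeling off one $q$-binomial at each stage'' --- does not go through as stated: the sum \eqref{eq:NlogdP302scat} in the geometric variables $(d_0,d_1,d_2,d_3)$ is too rigid for a direct attack, because the entry $d_2+d_3$ appearing in the tops of the $q$-binomials is locked to the other summation constraints. The key idea you are missing is to introduce the five-parameter deformation $G(a,b,c,d,e)$ of \eqref{eq:Gdef}, in which the parameter $e$ replacing $d_2+d_3$ is decoupled from the constraints; peeling off the $n=1$ layer of walls then produces the self-similar recursion \eqref{eq:Grec} of \cref{lem:Grec}, and one proves the closed form \eqref{eq:Gclosedform} by induction on $a$, verifying that the candidate $\tilde{G}$ satisfies the same recursion via four applications of the $q$-Pfaff--Saalsch\"utz summation \eqref{eq:qPS} (in the pairs $k_1,k_2$ and then $k_3,k_4$). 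Note also that the answer is a \emph{difference} $\tilde{G}(a,b,c,d,e)-\tilde{G}(a-1,b,c,d,e-2)$, which only makes sense after the deformation in $e$; your expectation of a single product of four $q$-binomials emerging from a telescoping collapse would miss this structure. Your fallback suggestions (a $q$-WZ certificate or a bijective proof) are plausible in principle but are left entirely speculative, so as written the proposal does not constitute a proof of \cref{prop:Gclosedform}.
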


    Indeed, comparing \eqref{eq:opendp302} with \eqref{eq:NlogdP302closedform} we see that these generating series are related as in \eqref{eq:logopengen}.  	The second statement, equation \eqref{eq:lmovlogBPS}, then also follows from  \cref{prop:NlogdP302closedform} combined with the BPS integrality result of \cite[Thm.~8.1]{Bousseau:2020fus} for $l=2$, whose proof applies identically to this case. \\
	
	We will show \cref{prop:NlogdP302closedform} in two main steps. We will first construct a toric model for $\mathrm{dP}_3(0,2)$ in the sense of \cite{GHKlog}, and then compute the $\lambda_g$-log Gromov--Witten invariants \eqref{eq:loggw} from the corresponding quantum scattering diagram and algebra of quantum broken lines \cites{mandel2015scattering, ManRu, bousseau2018quantum, MR4048291, davison2019strong, bousseau2020strong,grafnitz:2020tropical,GRZ:2022properLG}.
 The lack of tameness is epitomised by the existence of infinite scattering when two quantum walls meet, and the resulting sum over quantum broken lines leads to the intricate-looking multi-variate generalised hypergeometric sum in \eqref{eq:NlogdP302scat}. The second step consists of proving that this sum admits a closed-form $q$-hypergeometric resummation given by \eqref{eq:NlogdP302closedform}. To our knowledge, this has not previously appeared in the literature, with the exception of the special cases $d_3=d_2=d_0$ and $d_3=d_0=d_1+d_2$ considered in \cite{Kra21}.
	To prove it, we first establish a 
 difference equation satisfied by a 1-parameter deformation of \eqref{eq:NlogdP302scat}, and then show inductively that the resulting recursion has a unique closed-form solution compatible with \eqref{eq:NlogdP302scat} by repeated use of Jackson's $q$-analogue of the Pfaff--Saalsch\"utz summation for the $_3\phi_2$-hypergeometric function. 

	\subsection*{Acknowledgements} 
	
	We thank P.~Bousseau and M.~van Garrel for many enlightening discussions surrounding the topic of this paper. We are particularly indebted to C.~Krattenthaler for an illuminating e-mail correspondence occurred prior to the appearance of \cite{Kra21}. He exposed us to the idea that our sought-for $q$-hypergeometric identities were too rigid to be tackled directly, whereas suitable parametric refinements might be amenable to an effective recursive strategy. Special thanks are owed to him for this insight, which is key to the arguments of \cref{sec:recrel}. We are also grateful to the anonymous referees for their valuable input.
	
	\section{Proof of \Cref{conj:higherg}}

    \subsection{Log GW invariants from the quantum tropical vertex} 
    We start off by giving a  summary of the combinatorial setup for the calculation of the higher genus log GW invariants \eqref{eq:GWexp_intro} from the associated quantum scattering diagram, referring the reader to \cite[Sec.~4.2]{Bousseau:2020fus} for a more extensive treatment.

    \subsubsection{Toric models.} 
    Two birational operations on log Calabi--Yau surfaces $Y(D)$ will feature prominently in the rest of the paper.
    \begin{itemize}
    \item If $\widetilde{Y}$ is the blow-up of $Y$ at a node of $D$ and $\widetilde{D}$ is the preimage of $D$ in $\widetilde{Y}$ we say $\widetilde{Y}(\widetilde{D})$ is a \define{corner blow-up} of $Y(D)$.
    \item In case $\widetilde{Y}$ is the blow-up of $Y$ at a smooth point in $D$ and $\widetilde{D}$ is the strict transform of $D$ in $\widetilde{Y}$ we say $\widetilde{Y}(\widetilde{D})$ is an \define{interior blow-up} of $Y(D)$.
    \end{itemize}
    
    Starting from a Looijenga pair $Y(D)$, we will seek to construct pairs $\widetilde{Y}(\widetilde{D})$ and $\overline{Y}(\overline{D})$ fitting into a diagram
    \begin{equation}
    \label{eq:toricmodel}
        \begin{tikzcd}
            & \widetilde{Y}(\widetilde{D}) \arrow{ld}[swap]{\phi} \arrow{rd}{\pi} & \\
            Y(D) & & \overline{Y}(\overline{D})
        \end{tikzcd}
    \end{equation}
    where $\phi$ is a sequence of corner blow-ups and $\pi$ is a {\it toric model}, meaning that $\overline{Y}$ is toric, $\overline{D}$ is its toric boundary and $\pi$ is a sequence of interior blow-ups. By \cite[Prop.~1.3]{GHKlog} such a diagram always exists. 
  We will write $\rho_{D_i}$ for the generator of the ray in the fan $\overline{\Sigma}$ of $\overline{Y}$ associated to the toric prime divisor which is the push-forward along $\pi$ of the strict transform of $D_i$ under $\phi$. \\
  
  Given a toric model $\pi:\widetilde{Y}(\widetilde{D})\rightarrow\overline{Y}(\overline{D})$ as in \eqref{eq:toricmodel} we can associate a {\it consistent quantum scattering diagram} $\mf{D}$ to it, which is what we discuss next.

    \subsubsection{Quantum scattering and higher genus invariants.} \label{sec:qScatt}  The scattering diagram $\mathfrak{D}$ is defined on the lattice $N_\bbZ \cong \Z^2$ 
    of integral points of the fan $\overline{\Sigma}$ of $\overline{Y}$, as follows: assume that $\pi$ is a sequence of blow-ups of distinct smooth points $P_1,\ldots,P_s$ of $\overline{D}$ and denote $\mc{E}_1,\ldots,\mc{E}_s$  the exceptional divisors in $\widetilde{Y}$ associated to these blow-ups. Further, write $\rho_j$ for the primitive generator of the ray associated to the toric prime divisor which $P_j$ is an element of. For each $j\in\{1,\ldots,s\}$ we introduce a wall $\mf{d}_j\coloneqq \bbR\rho_j$ decorated with wall-crossing functions $f_{\mf{d}_j}\coloneqq 1+ t_j z^{-\rho_j}\in \bbC\llbracket t_1,\ldots,t_s\rrbracket [N_\bbZ]$. We will often write  $z^\rho \eqqcolon x^a y^b$ if $\rho=(a,b)$. 
    Then $\mf{D}$ is the  unique (up to equivalence) completion of the initial scattering diagram $\mf{D}_{\mr{in}}\coloneqq \{(\mf{d}_j,f_{\mf{d}_j})\}_{j\in\{1,\ldots,s\}}$ in the sense of \cites{MR2181810,GPS,MR4048291,bousseau2018quantum}. Such a completion can be found algorithmically by successively adding new rays whenever two walls meet, as we now describe. \\
    
    First of all, after perturbing the diagram $\mf{D}_{\mr{in}}$ we may assume that walls intersect in codimension at least one and that no more than two walls meet in a point. Now suppose two walls $\mf{d}^1$, $\mf{d}^2$ intersect. Denote by $-\rho^i$ a primitive integral direction of $\mf{d}^i$ and assume $f_{\mf{d}^i}=1+c_i z^{\rho^i}$. For our purpose the relevant scattering processes are:
    \begin{itemize}
        \item $\det(\rho^1,\rho^2)=\pm 1$ (\textit{simple scattering}): the algorithm tells us to add a ray $\mf{d}$ emanating from the intersection point $\mf{d}^1\cap\mf{d}^2$ in the direction $-\rho^1-\rho^2$ decorated with $1+c_1 c_2 z^{\rho^1+\rho^2}$.
        \item $\det(\rho^1,\rho^2)=\pm 2$ (\textit{infinite scattering}): the algorithm creates three types of walls. First, there is a central wall in the direction $-\rho^1-\rho^2$ decorated with a wall-crossing function whose explicit shape is not of interest in the subsequent analysis and hence omitted. Further -- and most relevant for us later --  one needs to add walls $\mf{d}_1,\ldots, \mf{d}_n,\ldots$ with slope $-(n+1)\rho^1-n\rho^2$ decorated with
        \begin{equation*}
        1+c_1^{n+1} c_2^{n} z^{(n+1)\rho^1+n\rho^2}
        \end{equation*}
        and last a collection ${}_1\mf{d},\ldots, {}_n\mf{d},\ldots$ of walls respectively having slope $-n\rho^1-(n+1)\rho^2$ and decorated with
        \begin{equation*}
        1+c_1^{n} c_2^{n+1} z^{n\rho^1+(n+1)\rho^2}\,.
        \end{equation*}
    \end{itemize}
    Adding new walls to the scattering diagram possibly creates new intersection points. Perturbing the diagram if necessary and repeating the above described process for each newly created intersection point yields a consistent scattering diagram $\mf{D}$.\\
    
    We now introduce the final combinatorial object we require for our computation of log Gromov--Witten invariants. Let $N_\bbR \coloneqq N_\bbZ \otimes_\bbZ \bbR$. Given $p\in N_\bbR$ and $m\in N_\bbZ$, a \define{quantum broken line} $\beta$ \define{with ends} $p$ \define{and} $m$ consists of
    \begin{enumerate}
        \item a continuous piece-wise straight path $\beta:(-\infty, 0]\rightarrow B \setminus \bigcup_{\mf{d}\in\mf{D}}\partial\mf{d} \cup \bigcup_{\mf{d}^1\neq\mf{d}^2} \mf{d}^1\cap\mf{d}^2$ intersecting walls transversely;
        
        \item a labelling $L_1,\ldots,L_n$ of the successive line segments with $L_n$ ending at $p$ such that each intersection point $L_i\cap L_{i+1}$ lies on a wall;
        
        \item\label{item:qblproperty3} an assignment $a_i z^{m_i}$ to each line segment $L_i$ such that
        \begin{itemize}
            \item $a_1 z^{m_1} = z^m$,
            \item if $L_i\cap L_{i+1}\subset \mf{d}$ with $f_{\mf{d}}=\sum_{r\geq0} c_r z^{r \rho_{\mf{d}}}$ and $\rho_{\mf{d}}$ chosen primitive then $a_{i+1} z^{m_{i+1}}$ is a monomial occurring in the expansion of
            \begin{equation}
            \label{eq:wallcrossingtrafo}
            a_{i} z^{m_{i}}\prod_{\ell= -\frac{1}{2}(|\det(\rho_{\mf{d}},m_{i})|-1)}^{\frac{1}{2}(|\det(\rho_{\mf{d}},m_{i})|-1)}\left(\sum_{r\geq0} c_r q^{r\ell} z^{r \rho_{\mf{d}}}\right)\,,
            \end{equation}
            \item $L_i$ is directed in direction $-m_i$.
        \end{itemize}
    \end{enumerate}
    For such a quantum broken line $\beta$ call $a_{\beta,\mr{end}}\coloneqq a_n$ the end-coefficient and write $m_{\beta,\mr{end}}\coloneqq m_n$. Moreover, we will often refer to $a_{\beta,\mr{end}} z^{m_{\beta,\mr{end}}}$ as the end-monomial and to $z^m$ as the asymptotic monomial of $\beta$. We remark that for $f_{\mf{d}}=1+c z^{\rho_{\mf{d}}}$ the product in \eqref{eq:wallcrossingtrafo} takes the form
    \begin{equation}
    \label{eq:qbinomialthm}
        \prod_{\ell= -\frac{1}{2}(|\det(\rho_{\mf{d}},m_{i})|-1)}^{\frac{1}{2}(|\det(\rho_{\mf{d}},m_{i})|-1)}\left(1 + c q^{\ell} z^{ \rho_{\mf{d}}}\right) = \sum_{k=0}^{|\det(\rho_{\mf{d}},m_{i})|} \qbinom{|\det(\rho_{\mf{d}},m_{i})|}{k}_q c^k z^{k\rho_{\mf{d}}}\,.
    \end{equation}
    Now given two lattice vectors $m_1,m_2\in N_\bbZ$, define 
    \begin{equation}
        \label{eq:sumqbl}
        C^{\mf{D}}_{p;\,m_1,m_2}(q) \coloneqq \sum_{\substack{\beta_1,\beta_2  \\ \mr{Ends}(\beta_i) = (p, m_i) \\ m_{\beta_1,\mr{end}} + m_{\beta_2,\mr{end}} = 0}} a_{\beta_1,\mr{end}} \, a_{\beta_2,\mr{end}}
    \end{equation}
    to be the sum of products of all end-coefficients of quantum broken lines $\beta_1$ and $\beta_2$ with asymptotic monomials $z^{m_1}$, resp.\  $z^{m_2}$, meeting in a common point $p$ and with opposite exponents of their end-monomials. This sum is a polynomial in the variables $t_j$ with coefficients in $\bbZ[q^{\pm \frac{1}{2}}]$. It turns out that the definition of $C^{\mf{D}}_{p;\,m_1,m_2}(q)$ is mostly independent of $p$. \\
    \begin{prop} \cite[Proposition 2.13 \& 2.15]{mandel2015scattering}
        The sum in \eqref{eq:sumqbl} is finite and as long as $p$ is chosen generic, $C^{\mf{D}}_{p;\,m_1,m_2}(q)$ is independent of the choice of $p$.
    \end{prop}
    Moreover, we remark that according to the same proposition \cite[Proposition 2.15]{mandel2015scattering} $C^{\mf{D}}_{p;\,m_1,m_2}(q)$ enjoys the interpretation as being the constant term in the product of two theta functions. However, most crucial for us, this quantity gives us a way to extract the higher genus, maximal contact log Gromov--Witten invariants \eqref{eq:GWexp_intro} of a Looijenga pair $Y(D)$ from the scattering diagram associated to its toric model, as per the following

    \begin{prop}[\cite{Man19,Bousseau:2020fus}]
    \label{prop:logGWfromQBL}
    Let $Y(D)$ be a 2-component Looijenga pair, i.e.\ $D=D_1+D_2$,  $\pi:\widetilde Y(\widetilde D) \to \overline{Y}(\overline{D})$ a toric model for it as in \eqref{eq:toricmodel}, and $\mathfrak{D}$ the corresponding consistent quantum scattering diagram. For $d\in\hhh_2(Y,\bbZ)$ an effective curve class, set $m_i\coloneqq (d\cdot D_i)\rho_{D_i}$. Then $\mathbb{N}^{\rm log}_d(Y(D))(\hbar)$ is the coefficient of $\prod_{j=1}^s t_j^{d \cdot \phi_* \mc{E}_j}$ in 
    \begin{equation*}
        C^{\mf{D}}_{p;\,m_1,m_2}(q)\,\bigg|_{q=\re^{\ri\hbar}}\,.
    \end{equation*}
    \end{prop}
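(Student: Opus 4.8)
The plan is to deduce the identity by combining the higher genus tropical vertex of \cite{Bousseau:2020fus} with the quantum broken line formalism of \cite{Man19}, in three movements: reduce from $Y(D)$ to its toric model, recognise both sides as generating functions of the same $\lambda_g$-deformed log invariants, and reconcile them through the structure constants of the quantum theta algebra.

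First I would pass from $Y(D)$ to the toric model $\overline{Y}(\overline{D})$. The corner blow-ups $\phi$ in \eqref{eq:toricmodel} leave the maximal contact $\lambda_g$-invariants unchanged, so $\mathbb{N}^{\rm log}_d(Y(D))(\hbar)$ agrees with the corresponding generating function on $\widetilde{Y}(\widetilde{D})$, while the interior blow-ups recorded by $\pi$ are exactly the data feeding the scattering diagram, with each exceptional class $\mc{E}_j$ tracked by the wall variable $t_j$ through the initial function $1+t_j z^{-\rho_j}$. Under this dictionary a monomial $\prod_j t_j^{a_j}$ produced by scattering records a contribution of class $\phi^*d$ with $a_j = \phi^*d\cdot\mc{E}_j = d\cdot\phi_*\mc{E}_j$, so the coefficient extraction $\l[\prod_j t_j^{d\cdot\phi_*\mc{E}_j}\r]$ isolates precisely the classes mapping to $d$.

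The heart of the argument is the geometric reading of the two pieces. By the higher genus tropical vertex of \cite{Bousseau:2020fus}, building on \cite{Man19}, the $q$-refined wall functions of $\mf{D}$ with $q=\re^{\ri\hbar}$ encode the $(-1)^g\lambda_g$-twisted maximal tangency log invariants of the toric model: the $q$-binomial in \eqref{eq:wallcrossingtrafo} is exactly the quantisation implementing the $(-1)^g\lambda_g$ insertion and producing the $\hbar$-expansion of \eqref{eq:GWexp_intro}. By the quantum broken line theorem of \cite{Man19}, the structure constant $C^{\mf{D}}_{m_1,m_2}(q)$ computes the coefficient of $\vartheta_0$ in the product $\vartheta_{m_1}\cdot\vartheta_{m_2}$ in the quantum theta algebra. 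With $m_i=(d\cdot D_i)\rho_{D_i}$ the two asymptotic ends impose maximal tangency of order $d\cdot D_i$ along $D_i$, the balancing condition $m_{\beta_1,\mr{end}}+m_{\beta_2,\mr{end}}=0$ selects the $\vartheta_0$-component, and fixing the shared endpoint $p$ realises the single point insertion $\mathrm{ev}^*[\mathrm{pt}_Y]$ needed when $l=2$. Matching the two descriptions identifies $C^{\mf{D}}_{m_1,m_2}(q)$, after the coefficient extraction, with $\mathbb{N}^{\rm log}_d(Y(D))(\hbar)$.

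I expect the genuine obstacle to lie in the quantum/higher genus half of this dictionary rather than in the degree bookkeeping: one must confirm that the $q$-deformed broken line wall-crossing \eqref{eq:wallcrossingtrafo} computes precisely the $\lambda_g$-twisted invariants with the correct signs and $(2\sin(\hbar/2))^{2-l}$ normalisation, and that the basepoint independence of $C^{\mf{D}}_{m_1,m_2}(q)$ reflects the deformation invariance of the log invariants. Both inputs are provided by the cited theorems, so the task reduces to verifying that their hypotheses apply to a $2$-component pair and to assembling the identifications above.
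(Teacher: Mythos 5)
The paper offers no independent proof of this proposition: it is quoted verbatim from \cite{Man19} and \cite{Bousseau:2020fus}, and your sketch correctly identifies the two ingredients those references supply (Mandel's quantum broken line/theta function structure constants and the higher genus tropical vertex computing the $(-1)^g\lambda_g$-twisted maximal contact invariants, with the $(2\sin(\hbar/2))^{2-l}$ prefactor trivial since $l=2$) and assembles them as the cited sources do. So your proposal is correct and takes essentially the same route as the paper, namely deferring the substantive content to the quoted theorems.
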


    \subsubsection{Birational invariance.}

    Suppose now that $Y(D)$ is a 2-component quasi-tame Looijenga pair and $\pi:Y'(D')\to Y(D)$ is a sequence of interior blow-ups such that $Y'(D')$ is also quasi-tame. The following compatibility statement explains how the higher genus log-open correspondence interacts with this type of birational transformations.
    
    \begin{prop}\cite[Prop.\ 4.3 and 6.9]{Bousseau:2020fus} Let $\pi:Y'(D')\rightarrow Y(D)$ be an interior blow-up, with both $Y'(D')$ and $Y(D)$ 2-component quasi-tame Looijenga pairs. Then $\mathbb{N}^{\rm log}_d(Y(D)) = \mathbb{N}^{\rm log}_{\pi^*d}(Y'(D'))$, $     \mathbb{O}_{d}(Y^{\rm op}(D)) = \mathbb{O}_{\pi^*d}(Y'^{\rm op}(D'))$ for all $d\in\hhh_2(Y,\bbZ)$.
    \label{prop:birinv}
    \end{prop}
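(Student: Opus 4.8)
The plan is to reduce both equalities to the combinatorial descriptions of the invariants, exploiting the fact that an interior blow-up at a smooth point of $D$ modifies the relevant bookkeeping data by adjoining a single, ultimately inert, piece of information. For the log invariants this is transparent through \cref{prop:logGWfromQBL}; for the open invariants one runs the parallel argument on the side of the topological-vertex generating series. In both cases the crux is that the new exceptional curve $E$ of $\pi$ pairs trivially with $\pi^* d$.

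For the log equality I would first fix compatible toric models. Since $\pi$ blows up a smooth point $p$ of $D$ lying on a single component, say $D_1$, I can choose a toric model $\pi:\widetilde Y(\widetilde D)\to\overline Y(\overline D)$ for $Y(D)$ as in \eqref{eq:toricmodel} in which $p$ is the image of a smooth point $P_{s+1}$ on the toric boundary divisor $\overline D_1$, so that blowing up $P_1,\dots,P_s,P_{s+1}$ yields a toric model for $Y'(D')$. Because $D_i'$ is the strict transform and $E$ the $\pi$-exceptional curve, the projection formula gives $\pi^* d\cdot D_i'=d\cdot D_i$ and $\pi^* d\cdot E=0$, so the asymptotic directions $m_i=(\pi^* d\cdot D_i')\rho_{D_i'}=(d\cdot D_i)\rho_{D_i}$ feeding \cref{prop:logGWfromQBL} are unchanged. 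The initial diagram $\mf D_{\mr{in}}'$ for $Y'(D')$ is that of $Y(D)$ together with one extra wall $\mf d_{s+1}=\bbR\rho_{s+1}$ carrying $f_{\mf d_{s+1}}=1+t_{s+1}z^{-\rho_{s+1}}$, and \cref{prop:logGWfromQBL} reads
\[
\mathbb{N}^{\rm log}_{\pi^* d}(Y'(D'))(\hbar)=\Big[\prod_{j=1}^{s+1}t_j^{\pi^*d\cdot\phi'_*\mc E_j}\Big]C^{\mf D'}_{m_1,m_2}(q)\big|_{q=\re^{\ri\hbar}}.
\]
The key computation is $\pi^* d\cdot\phi'_*\mc E_{s+1}=d\cdot\pi_*(\phi'_*\mc E_{s+1})=0$, since $\phi'_*\mc E_{s+1}$ is the class of the $\pi$-contracted curve $E$, while $\pi^* d\cdot\phi'_*\mc E_j=d\cdot\phi_*\mc E_j$ for $j\le s$. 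Extracting this coefficient therefore amounts to taking the $t_{s+1}^0$ coefficient, i.e.\ evaluating at $t_{s+1}=0$; this trivializes $\mf d_{s+1}$, so $\mf D'|_{t_{s+1}=0}$ is equivalent to $\mf D$ and $C^{\mf D'}_{m_1,m_2}|_{t_{s+1}=0}=C^{\mf D}_{m_1,m_2}$. Extracting the remaining $\prod_{j\le s}t_j^{d\cdot\phi_*\mc E_j}$ coefficient recovers precisely $\mathbb{N}^{\rm log}_d(Y(D))(\hbar)$.

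For the open equality I would argue in the topological-vertex description of $\mathbb{O}_d(Y^{\rm op}(D))$: the same interior blow-up preserves both the relative degrees $d\cdot D_i$ and the winding numbers $w_i(d)$ recording the boundary data on the Lagrangians $L_i$, while $E$ bounds no disc ending on the $L_i$ and enters only through the trivial sector $\pi^* d\cdot E=0$. Running the vertex resummation then produces the identical $\hbar$-series after the analogous coefficient extraction, giving $\mathbb{O}_d(Y^{\rm op}(D))=\mathbb{O}_{\pi^* d}(Y'^{\rm op}(D'))$.

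The main obstacle I anticipate is not the coefficient bookkeeping but its two structural inputs. First, that a toric model for $Y(D)$ can always be promoted to one for $Y'(D')$ by a single extra interior blow-up compatible with \eqref{eq:toricmodel}, which requires controlling the interaction of the interior blow-up with the corner blow-ups comprising $\phi$. Second, the claim that deactivating one initial wall ($t_{s+1}=0$) commutes with passing to the consistent completion, so that the equivalence class of $\mf D'|_{t_{s+1}=0}$ is genuinely that of $\mf D$; making this precise needs the continuity and uniqueness of the completion in the wall-crossing parameters, and this is where I would concentrate the technical effort. On the open side the analogous difficulty is to carry out the invariance \emph{intrinsically} via the vertex rather than through the higher-genus correspondence being established here, so as to avoid circularity.
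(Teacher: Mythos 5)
First, a point of calibration: the paper does not prove \cref{prop:birinv} at all --- it is imported wholesale from [BBvG, Prop.~4.3 and 6.9], and the only in-paper justification is a two-sentence remark (enumerative invariance of the genus-$0$ counts under blowing up a point away from the curves, reflected in the broken-line calculus, for the log side; invariance of the topological vertex under flops for the open side). Your argument for the log equality is essentially the correct fleshing-out of that remark and of the cited Prop.~4.3: the projection formula gives $\pi^*d\cdot E=0$, the coefficient extraction in \cref{prop:logGWfromQBL} therefore sets $t_{s+1}=0$, and the two technical inputs you isolate (promoting a toric model of $Y(D)$ to one of $Y'(D')$, which uses deformation invariance of the log invariants to move the blown-up point into general position, and the compatibility of $t_{s+1}=0$ with the consistent completion, which follows from uniqueness of the completion up to equivalence since the specialised completed diagram is consistent with the smaller initial data) are exactly the right places to spend the effort. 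This part is fine.

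The open equality is where your proposal has a genuine gap. You treat it as a second coefficient extraction: ``$E$ bounds no disc and enters only through the trivial sector $\pi^*d\cdot E=0$.'' But $Y'^{\rm op}(D')$ is not $Y^{\rm op}(D)$ with an extra decoupled parameter: an interior blow-up of the surface changes the toric Calabi--Yau threefold underlying the open geometry by adding a vertex to its toric diagram, and the two threefolds (with their Aganagic--Vafa branes) are related by a \emph{flop}, under which curve classes are genuinely re-identified rather than merely restricted. The actual mechanism behind $\mathbb{O}_{d}(Y^{\rm op}(D)) = \mathbb{O}_{\pi^*d}(Y'^{\rm op}(D'))$ --- as the paper states explicitly and as the cited Prop.~6.9 uses --- is the invariance of the (open) topological vertex under flops \cite{Iqbal:2004ne,Manabe:2009sf}, combined with the vanishing of the contributions from the flopped $(-1,-1)$-curve in the relevant degrees. ``Running the vertex resummation'' on the blown-up geometry does not produce the identical $\hbar$-series term by term; one must first flop and then match generating functions, and your sketch neither invokes this nor substitutes an alternative argument. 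You correctly flag the circularity danger of going through the log--open correspondence, but you do not actually supply the non-circular route.
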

    
    The comparison statement of \cref{prop:birinv} for log invariants is easy to visualise in genus 0, where the invariants $N^{\rm log}_{0,d}(Y(D))$ are enumerative \cite{Man19}: since blowing up a point away from the curves does not affect the local geometry, the corresponding counts are invariant, a  property also reflected in the broken lines calculations of the scattering diagrams of \cref{sec:qScatt}. The corresponding statement for the open invariants is a non-trivial consequence of the invariance of the topological vertex under flops \cite{Iqbal:2004ne,MR2380471}.\\
    
    By the classification theorem of nef Looijenga pairs in \cite[Prop.~2.2]{Bousseau:2020fus}, any non-tame, quasi-tame Looijenga pair $Y(D)$ is obtained up to deformation as a sequence of $m\geq 0$ interior blowings-down of $\mathrm{dP}_3(0,2)$. It follows from \cref{prop:birinv} that proving \cref{conj:higherg} for $Y(D)=\mathrm{dP}_3(0,2)$ implies that the same statement {\it a fortiori} holds for any other non-tame, quasi-tame pair (and indeed any 2-component quasi-tame pair with Picard number lower than four). 
    

	\subsection{The case of maximal Picard number}
	\label{sec:proof}
		Let us then specialise to $Y(D)=\mathrm{dP}_3(0,2)$. Throughout this section we will write $d = d_0 (H-E_1-E_2-E_3) + \sum_{i=1}^3 d_i E_i$ for a curve class $d\in\hhh_2(\mathrm{dP}_3,\Z)$. If $d\cdot D_1$ or $d\cdot D_2$ vanishes, then $\mathbb{N}^{\rm log}_d\big(\mathrm{dP}_3(0,2)\big)(\hbar)=0$. In case the intersection numbers are strictly positive, we may use the scattering diagram of $\mathrm{dP}_3(0,2)$ to compute the invariants. 
		
	\subsubsection{Constructing the toric model.}\label{sec:tormod} First, we recall from \cite[Sec. 4.4]{Bousseau:2020fus} the construction of a toric model of $\mathbb{P}^2(D_1\cup D_2)$, with $D_1$ a line and $D_2$ a smooth conic intersecting $D_1$ in two distinct points $P_1$ and $P_2$ . Let $\mc{E}$ denote the tangent line through $P_1$ to $D_2$. In the following we will always identify $D_1$, $D_2$, $\mc{E}$, and some yet to be defined divisors $F_1$, $F_2$ with their strict transforms, resp.~push-forwards, under blow-ups, resp.~blow-downs. \\
	
	Blow up the point $P_1$, and write $F_1$ for the  exceptional divisor, and then blow up the intersection point of $F_1$ with $D_2$ and denote the exceptional curve by $F_2$.  Under these blow-ups, the strict transform of $\mc{E}$ is a $(-1)$-curve intersecting $F_2$ in one point and can therefore be blown down.
		This blow-down results in the log Calabi--Yau surface $\big(\overline{\mathbb{P}^2(D)},\overline{D}\big)$ with anti-canonical divisor $\overline{D}=D_1 \cup F_1 \cup F_2 \cup D_2$ where $F_1$ is a curve with self-intersection $-2$, $D_2$ has self-intersection $2$, and $D_1$, $F_2$ zero. Therefore, since the irreducible components of $\overline{D}$ form a necklace with the same self-intersections as the toric boundary of $\mathbb{F}_2$, we already must have $\overline{\mathbb{P}^2(D)} = \mathbb{F}_2$ with $\overline{D}$ the toric boundary by \cite[Lemma 2.10]{Fri16} and hence we have constructed a toric model for $\mathbb{P}^2(D)$. From the discussion of the previous Section, at a tropical level the fact that we blew down $\mc{E}$  amounts to adding a wall $\mathfrak{d}_{F_2}$ emanating from a {\it focus-focus singularity} on the ray corresponding to $F_2$ in the fan of $\mathbb{F}_2$.\\
	
The above construction results in a toric model for $\mathrm{dP}_3(0,2)$, as displayed in \Cref{fig:dP302}: we blow up an interior point on $D_1$ and two interior points on $D_2$ and take the proper transforms, which at the fan level leads to the addition of focus-focus singularities on the rays corresponding to $D_1$ and $D_2$ (indicated with crosses in \Cref{fig:dP302}). We denote by $\mc{E}_1$, resp.~by $\mc{E}_2$, $\mc{E}_3$ the exceptional loci that result from blowing up a point in $D_1$, resp.~two points on $D_2$.
		
					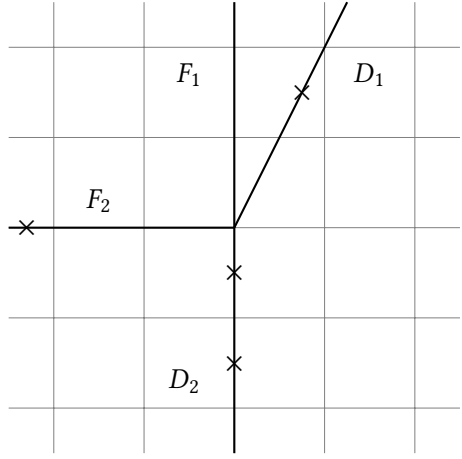
\begin{figure}[h]%
			\begin{center}%
				\begin{tikzpicture}[smooth, scale=1.2]%
					\draw[step=1cm,gray,very thin] (-2.5,-2.5) grid (2.5,2.5);
					\draw[thick] (0,0) to (-2.5,0);
					\draw[thick] (0,-2.5) to (0,2.5);
					\draw[thick] (0,0) to (1.25,2.5);
					\node at (-0.55,-1.7) {$D_2$};
					\node at (1.5,1.7) {$D_1$};
					\node at (-0.5,1.7) {$F_1$};
					\node at (-1.5,0.3) {$F_2$};
					\node at (-2.3,0) {$\times$};
					\node at (0.75,1.5) {$\times$};
					\node at (0,-0.5) {$\times$};
					\node at (0,-1.5) {$\times$};
				\end{tikzpicture}%
				\caption{The toric model of $\mathrm{dP}_3(0,2)$}%
				\label{fig:dP302}%
			\end{center}%
		\end{figure}
		
	\subsubsection{The quantum scattering diagram.}

		We now follow the construction outlined in \Cref{sec:qScatt} to derive the quantum scattering diagram $\mf{D}$ of the toric model of $\mathrm{dP}_3(0,2)$ (\Cref{fig:dP302scatt}). We shoot out walls $\mf{d}_{F_2}$, $\mf{d}_{D_1}$, $\mf{d}_{D_2,1}$, $\mf{d}_{D_2,2}$ emanating from the focus-focus singularities
			in the direction $-\rho$, where $\rho$ is the respective generator of the ray in the fan. We send the singularities to infinity and perturb the walls as indicated in \Cref{fig:dP302scatt}. From these initial walls we now want to construct a consistent scattering diagram. However, since in our subsequent analysis we will only be interested in walls with slope lying in the cone generated by $(0,-1)$ and $(1,2)$, we will restrict the discussion to such walls only. As $|\rho_{F_2}\wedge \rho_{D_1}| = 2$ there is infinite scattering in the sense of \cref{sec:qScatt} between the walls $\mf{d}_{F_2}$ and $\mf{d}_{D_1}$. This results in walls $\mf{d}_2,\mf{d}_3,\ldots$ with slope $-n \rho_{F_2} - (n-1)\rho_{D_1} = (1,-2(n-1))$ decorated with wallcrossing functions $1+t^{n} t_1^{n-1} x^{-1} y^{2(n-1)}$ where $n>1$. For conformity, let us write $\mf{d}_1\coloneqq \mf{d}_{F_2}$. Now for all $n\geq 1$ each wall $\mf{d}_n$ intersects both $\mf{d}_{D_2,1}$ and $\mf{d}_{D_2,2}$. Luckily, in this case we only have simple scattering resulting in walls with slope $(1,-2n+3)$ and wallcrossing functions $1+t^{n} t_1^{n-1} t_i x^{-1} y^{2n-3}$ where $i\in\{2,3\}$. Lastly, we notice that the wall which is the result of scattering between $\mf{d}_n$ and $\mf{d}_{D_2,1}$ intersects $\mf{d}_{D_2,2}$ thus producing a wall with slope $(1,-2n+4)$ and wallcrossing function $1+t^{n} t_1^{n-1} t_2 t_3 x^{-1} y^{2n+4}$ attached to it. Let us call this wall $\mf{d}_n^{D_2,D_2}$. The whole construction is summarised in \Cref{fig:dP302scatt}.
				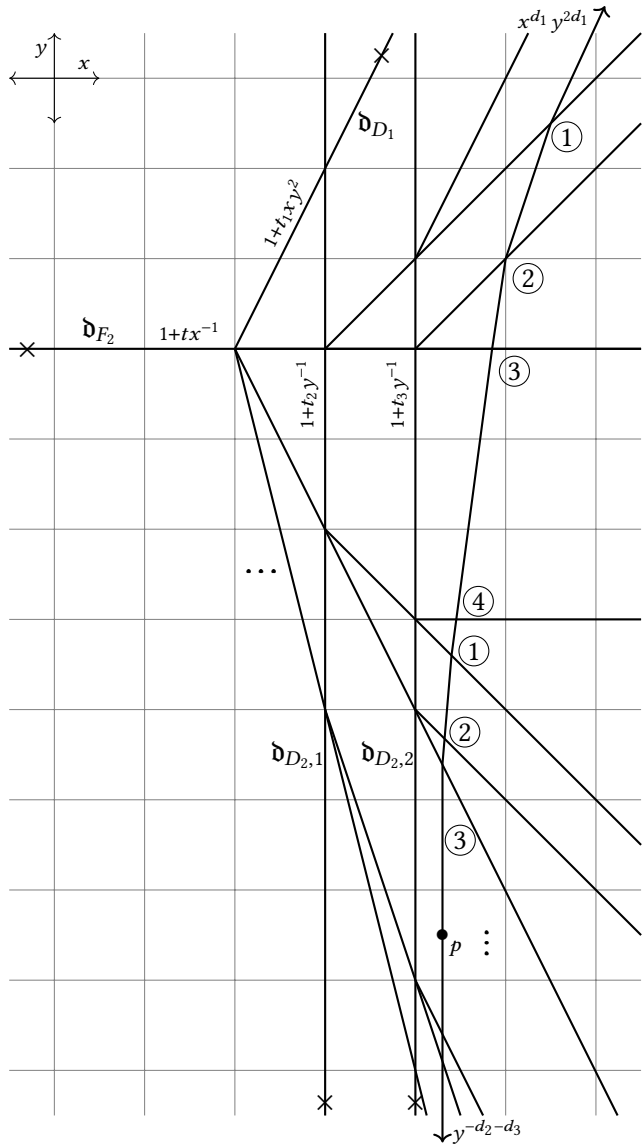
\begin{figure}[!b]
	\begin{center}
		\begin{tikzpicture}[smooth, scale=1.2]
			\draw[step=1cm,gray,very thin] (-2.5,-8.5) grid (4.5,3.5);
			\draw[<->] (-2.5,3) to (-1.5,3);
			\draw[<->] (-2,2.5) to (-2,3.5);
			\node at (-1.67,3.15) {$\scriptstyle{x}$};
			\node at (-2.15,3.3) {$\scriptstyle{y}$};
			\draw[thick] (-2.5,0) to (4.5,0);
			\draw[thick] (1,-8.5) to (1,3.5);
			\draw[thick] (2,-8.5) to (2,3.5);
			\draw[thick] (0,0) to (1.75,3.5);
			\draw[thick] (0,0) to (4.5,0);
			\draw[thick] (1,0) to (4.5,3.5);
			\draw[thick] (2,0) to (4.5,2.5);
			\draw[thick] (2,1) to (3.25,3.5);
			\draw[thick] (0,0) to (4.24,-8.5);
			\draw[thick] (1,-2) to (4.5,-5.5);
			\draw[thick] (2,-3) to (4.5,-3);
			\draw[thick] (2,-4) to (4.5,-6.5);
			\draw[thick] (0,0) to (2.125,-8.5);
			\draw[thick] (1,-4) to (2.5,-8.5);
			\draw[thick] (2,-7) to (2.75,-8.5);
			\node at (0.3,-2.5) { $\pmb{\cdots}$};
   			\node at (2.8,-6.5) {$\pmb{\vdots}$};
			\node at (1.58,2.48) {$\mathfrak{d}_{D_1}$};
			\node at (-1.5,0.2) {$\mathfrak{d}_{F_2}$};
			\node at (0.7,-4.5) {$\mathfrak{d}_{D_2,1}$};
			\node at (1.7,-4.5) {$\mathfrak{d}_{D_2,2}$};
			\node at (-2.3,0) {$\times$};
			\node at (1.625,3.25) {$\times$};
			\node at (1,-8.35) {$\times$};
			\node at (2,-8.35) {$\times$};
			\node at (-0.5,0.2) {$\sstyle{1+tx^{-1}}$};
			\node[rotate=63.43] at (0.55,1.5) {$\sstyle{1+t_1xy^2}$};
			\node[rotate=90] at (0.8,-0.5) {$\sstyle{1+t_2y^{-1}}$};
			\node[rotate=90] at (1.8,-0.5) {$\sstyle{1+t_3y^{-1}}$};
			\draw[<->,thick] (2.3,-8.8) to (2.3,-4.6) to (2.4,-3.4) to (2.85,0) to (3,1) to (3.5,2.5) to (4.1,3.8);
			\node at (3.54,3.63) {$\sstyle{x^{d_1} y^{2d_1}}$};
			\node at (2.8,-8.68) {$\sstyle{y^{-d_2-d_3}}$};
			\node at (2.3,-6.5) {$\bullet$};
			\node at (2.45,-6.65) {$\sstyle{p}$};
			\node at (3.68,2.35) {$\circleed{1}$};
			\node at (3.25,0.78) {$\circleed{2}$};
			\node at (3.1,-0.25) {$\circleed{3}$};
			\node at (2.7,-2.8) {$\circleed{4}$};
			\node at (2.65,-3.35) {$\circleed{1}$};
			\node at (2.55,-4.25) {$\circleed{2}$};
			\node at (2.5,-5.45) {$\circleed{3}$};
		\end{tikzpicture}
		\caption{The quantum scattering diagram of $\mathrm{dP}_3(0,2)$.}
		\label{fig:dP302scatt}
	\end{center}%
\end{figure}
		
		We collect the walls constructed above into 4-tuples labelled by an integer $n>0$ as depicted in \Cref{fig:walltuple}. The $n$th tuple consists of the wall $\mf{d}_n$, the walls which are the result of scattering between $\mf{d}_n$ and $\mf{d}_{D_2,1}$, resp.\ $\mf{d}_{D_2,2}$, and lastly the wall $\mf{d}_{n+1}^{D_2,D_2}$.
		
		\subsubsection{Higher genus log GW invariants.}
		
		In this section we will apply \cref{prop:logGWfromQBL} to obtain the log Gromov--Witten invariants of $\mathrm{dP}_3(0,2)$. For this we need to determine $C^{\mf{D}}_{p;\,m_1,m_2}(q)$, where $m_i\coloneqq (d\cdot D_i) \rho_{D_i}$ and $p$ is a generically chosen point. First, let us characterise all quantum broken lines which contribute to this sum.

    \begin{lem}
        \label{lem:qblcharacterisation}
        Choose $p$ in the lower right quadrant so that it lies to the right of $\mf{d}_{D_2,2}$ and that below $p$ there are only walls belonging to the $n$th 4-tuple of walls with $n\geq d\cdot D_2+2$. Then the following statements hold:
        \begin{enumerate}
            \item If $\beta_2$ is a quantum broken line with ends $(p,m_2)$ then either it is a straight line and thus $m_{\beta_2,\mr{end}}=m_2$ or $m_{\beta_2,\mr{end}}$ lies in the half open cone $\{-a_1 \rho_{D_1}-a_2\rho_{D_2} \,|\, a_1 > 0,\, a_2 \geq 0  \}$.
            
            \item If $\beta_1$ and $\beta_2$ are quantum broken lines with ends $(p,m_1)$ and $(p,m_2)$ respectively such that
            \begin{equation*}
                m_{\beta_1,\mr{end}} + m_{\beta_2,\mr{end}} = 0
            \end{equation*}
            then $\beta_2$ is a straight line and $\beta_1$ may only bend at walls to the right of $\mf{d}_{D_2,2}$ as in \Cref{fig:dP302scatt}.
        \end{enumerate}
    \end{lem}
    
    \begin{proof}
        Statement 1.~can be proven by a straightforward, but tedious case-by-case analysis. Since this proof is barely enlightening we omit it here, and only explain how 1.~implies 2. Indeed, suppose $\beta_2$ is not a straight line. Then by 1.~we must have $m_{\beta_1,\mr{end}}\in \{a_1 \rho_{D_1} + a_2\rho_{D_2} \,|\, a_1 > 0,\, {a_2 \geq 0}\}$. However, this means that $\beta_1$ only crosses walls at which it may pick up a contribution that bends it further into the the lower right quadrant. In particular, such a quantum broken line cannot have asymptotic monomial $z^{m_1}$, leading to a contradiction. Hence, $\beta_2$ must be a straight line.
    \end{proof}

    Having \cref{lem:qblcharacterisation} at hand, we are now equipped to determine the log Gromov--Witten invariants of $\mathrm{dP}_3(0,2)$ via \cref{prop:logGWfromQBL}.
    
	\begin{prop}
		Let $d$ be an effective curve class with $d\cdot D_1, d\cdot D_2>0$. Then
		\begin{equation}
			\begin{split}
				&\mathbb{N}^{\rm log}_d\big(\mathrm{dP}_3(0,2)\big)(\hbar) = \\
				&\quad \sum_{\substack{\forall (i,n)\in\{1,2,3,4\}\times\Z_{>0}: \, k_{i,n}\geq 0 \\ d_0 = \sum_{n\geq 1}\sum_{i=1}^4  (n+\delta_{i,1})  k_{i,n} \\ d_1 = \sum_{n\geq 1} \sum_{i=1}^4 k_{i,n} \\ d_0 - d_2 = \sum_{n\geq 1} (k_{1,n}+k_{4,n})  \\ d_0 - d_3 = \sum_{n\geq 1} (k_{1,n}+k_{3,n})}} ~\prod_{n\geq 1} \prod_{i=1}^{2} \qbinom{d_2+d_3 - \sum_{m\geq1} \sum_{j=1}^4 (2m-\delta_{j,3}-\delta_{j,4}) k_{j,n+m}}{k_{i,n}}_q \\[-1.2cm]
				&\hspace{13.5em} \times\qbinom{d_2+d_3 - \sum_{m\geq0} \sum_{j=1}^4 (2m+\delta_{j,1}+\delta_{j,2}) k_{j,n+m}}{k_{2+i,n}}_q.
			\end{split} 
			\label{eq:NlogdP302scat}
		\end{equation}
		\label{prop:NlogdP302scat}
	\end{prop}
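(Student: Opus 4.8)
The plan is to compute $\mathbb{N}^{\rm log}_d\big(\mathrm{dP}_3(0,2)\big)(\hbar)$ directly from \cref{prop:logGWfromQBL} applied to the scattering diagram of \cref{fig:dP302scatt}, by enumerating the contributing pairs of quantum broken lines. First I would record the two asymptotic directions: a short intersection computation on $\mathrm{dP}_3$ gives $d\cdot D_1=d_1$ and $d\cdot D_2=d_2+d_3$, so that $m_1=d_1\,(1,2)$ and $m_2=(d_2+d_3)\,(0,-1)$ in the basis attached to the rays $\rho_{D_1},\rho_{D_2}$ of the fan of $\mathbb{F}_2$. (When $d\cdot D_1=0$ or $d\cdot D_2=0$ one of the asymptotic monomials is trivial and the invariant vanishes, matching the statement, so assume both are positive.) Choosing the base-point $p$ deep inside the cone spanned by $(0,-1)$ and $(1,2)$, the broken line $\beta_1$ with asymptotic monomial $z^{m_1}$ reaches $p$ without meeting any wall, so $a_{\beta_1,\mathrm{end}}=1$ and $m_{\beta_1,\mathrm{end}}=m_1$; the opposite-exponent condition then forces $m_{\beta_2,\mathrm{end}}=-m_1$, and $C^{\mf D}_{m_1,m_2}$ reduces to the sum of end-coefficients of broken lines $\beta_2$ with asymptotic monomial $z^{m_2}$ ending with exponent $-m_1$.

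Next I would analyse $\beta_2$, which starts in direction $(0,1)$ and ascends, crossing walls transversally. Grouping the walls into the $4$-tuples of \cref{fig:dP302scatt} indexed by $n\ge 1$, the structural observation is that each tuple splits into two \emph{parallel} pairs: $\{\mf d_n,\ \mf d^{D_2,D_2}_{n+1}\}$ with primitive monomial exponent $(-1,2n-2)$, and the two $D_2$-scattered walls with exponent $(-1,2n-3)$. I would let $k_{i,n}\ge 0$ denote the number of copies of the monomial of the wall of type $i\in\{1,2,3,4\}$ in the $n$-th tuple absorbed at the corresponding crossing. Since the two members of a pair are parallel, crossing one does not alter the determinant governing the other, which is precisely why the top arguments of the two $q$-binomials in \eqref{eq:NlogdP302scat} depend only on the pair and not on the individual wall. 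Using the specialisation of \eqref{eq:wallcrossingtrafo} to $f_{\mf d}=1+cz^{\rho}$ recorded just after that formula, each crossing multiplies the end-coefficient by $\qbinom{|\det(\rho,m)|}{k_{i,n}}_q$, shifts the running exponent $m$ by $k_{i,n}\rho$, and contributes $c^{k_{i,n}}$ to the monomial in the deformation variables $t,t_1,t_2,t_3$.

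The technical heart is the determinant bookkeeping. Writing the running exponent as $m=m_2+\sum(\text{absorbed }k\,\rho)$ and using that every relevant $\rho$ has the form $(-1,\ast)$, so that $\det(\rho,m)=-m_y-\rho_y\,m_x$, I would evaluate $|\det(\rho,m)|$ at each crossing. Because after perturbation $\beta_2$ meets the tuples in order of decreasing $n$ (increasing slope) — all of tuple $n+m$, $m\ge1$, before tuple $n$, and within tuple $n$ the pair $(-1,2n-2)$ before the pair $(-1,2n-3)$ — the linear algebra collapses to the two stated expressions: the first pair yields $A_n=d_2+d_3-\sum_{m\ge1}\big(2m(k_{1,n+m}+k_{2,n+m})+(2m-1)(k_{3,n+m}+k_{4,n+m})\big)$, and the second pair yields the same sum shifted to $m\ge 0$, the extra $m=0$ term $k_{1,n}+k_{2,n}$ encoding the first pair's absorption within the same tuple. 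This reproduces exactly the two top arguments of \eqref{eq:NlogdP302scat}.

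Finally I would match the monomial in $t,t_1,t_2,t_3$. Reading the decorations off \cref{fig:dP302scatt} — $\mf d^{D_2,D_2}_{n+1}\!:t^{n+1}t_1^{n}t_2t_3$, $\mf d_n\!:t^{n}t_1^{n-1}$, and the two scattered walls $t^{n}t_1^{n-1}t_2,\ t^{n}t_1^{n-1}t_3$ — and extracting the coefficient of $\prod_j t_j^{d\cdot\phi_*\mc E_j}$ as in \cref{prop:logGWfromQBL} turns the total $t$-degree into the constraint $d_0=\sum_{i,n}(n+\delta_{i,1})k_{i,n}$, the $t$- minus $t_1$-degree into $d_1=\sum_{i,n}k_{i,n}$, and the $t_2$- and $t_3$-degrees into the two remaining constraints; the exponent balance $m_{\beta_2,\mathrm{end}}=-m_1$ is then automatic. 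Assembling the product of $q$-binomials over all crossings subject to these constraints yields \eqref{eq:NlogdP302scat}. I expect the determinant bookkeeping of the third paragraph — in particular fixing the crossing order and the regrouping of the parallel walls $\mf d_n$ and $\mf d^{D_2,D_2}_{n+1}$ coming from adjacent construction steps — to be the main obstacle; the vanishing cases and the precise identification of the classes $\phi_*\mc E_j$ feeding the $t$-extraction are routine by comparison.
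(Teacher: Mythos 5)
Your proposal follows the same route as the paper's own proof: reduce $C^{\mf{D}}_{m_1,m_2}$ to a single broken line $\beta_2$ (with $\beta_1$ straight near the ray $\rho_{D_1}$), track the $q$-binomial factors through the $4$-tuples of walls crossed in order of decreasing $n$ via the determinants $|\det(\rho,m)|$, and convert the $t$-monomial extraction of \cref{prop:logGWfromQBL} into the four summation constraints, with the exponent-balance conditions following automatically exactly as the paper notes. Your determinant bookkeeping --- including the correct identification of the parallel pairs $\{\mf{d}_{n+1}^{D_2,D_2},\mf{d}_n\}$ and the two $D_2$-scattered walls, which share the two top arguments appearing in \eqref{eq:NlogdP302scat} --- reproduces the paper's computation, so the argument is correct and essentially identical in approach.
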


	\begin{proof}

	    In order to compute $\mathbb{N}^{\rm log}_d\big(\mathrm{dP}_3(0,2)\big)$ we choose a point $p$ as specified in \cref{lem:qblcharacterisation} and consider quantum broken lines $\beta_i$ with ends $(p,z^{(d\cdot D_i) \rho_{D_i}})$, where $i\in\{1,2\}$, so that the sum of the exponents of their end-monomials at $p$ vanishes. As stated in \Cref{prop:logGWfromQBL}, we then obtain the desired log Gromov--Witten invariant by taking the product of the two end-coefficients, summing this over all such pairs of quantum broken lines and extracting the coefficient of the monomial $t^{d \cdot \phi_* \mc{E}} \prod_{i=1}^3 t_i^{d \cdot \phi_* \mc{E}_i}$.\\
		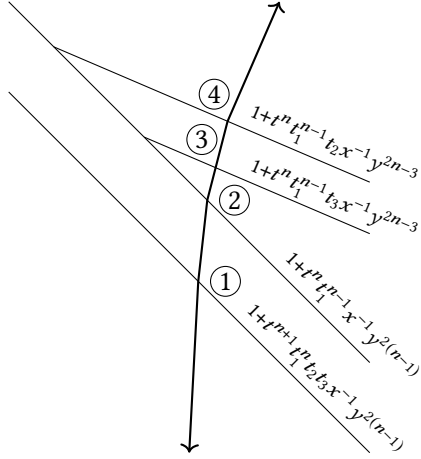
\begin{figure}[!t]%
			\begin{center}%
				\begin{tikzpicture}[smooth, scale=1.2]
					\draw (0.5,-0.5) to (4,-2);
					\node[rotate=-23.2] at (3.6,-1.6) {$\sstyle{1+t^{n}t_1^{n-1} t_2 x^{-1}y^{2n-3}}$};
					\draw (1.5,-1.5) to (4,-2.571);
					\node[rotate=-23.2] at (3.6,-2.2) {$\sstyle{1+t^{n}t_1^{n-1} t_3 x^{-1}y^{2n-3}}$};
					\draw (0,0) to (4,-4);
					\node[rotate=-45] at (3.8,-3.5) {$\sstyle{1+t^{n}t_1^{n-1} x^{-1}y^{2(n-1)}}$};
					\draw (0,-1) to (4,-5);
					\node[rotate=-45] at (3.5,-4.2) {$\sstyle{1+t^{n+1}t_1^{n} t_2 t_3 x^{-1}y^{2(n-1)}}$};
					\node at (2.4,-3.1) {$\circleed{1}$};
					\node at (2.5,-2.2) {$\circleed{2}$};
					\node at (2.14,-1.525) {$\circleed{3}$};
					\node at (2.275,-1.025) {$\circleed{4}$};
					\draw[<->,thick] (2,-5) to (2.1,-3.1) to (2.2,-2.2) to (2.425,-1.325) to (3,0);
				\end{tikzpicture}%
				\caption{The $n$th 4-tuple of walls.}
				\label{fig:walltuple}
			\end{center}%
		\end{figure}%
Let us then analyse a quantum broken line $\beta_1$ coming from the direction $D_1$ with asymptotic monomial $z^{(d \cdot D_1) \rho_{D_1}} =x^{d_1} y^{2d_1}$ ending at $p$ which only bends at walls to the right of $\mf{d}_{D_2,2}$ as illustrated in \cref{fig:dP302scatt}. We claim that its end-monomial is of the form
\begin{equation}
\begin{split}
\label{eq:endmonbeta1}
    a_{\beta_1,\mr{end}} z^{m_{\beta_1,\mr{end}}} = &\prod_{n=1}^N \prod_{i=1}^2  \qbinom{2n d_1 - \sum_{l=1}^n \sum_{j=1}^4 (2(n-l)-\delta_{j,3}-\delta_{j,4})k_{j,l} }{k_{i,n}}_q   \\
    &\hspace{2em}\times \qbinom{(2n-1) d_1 - \sum_{l=1}^{n-1} \sum_{j=1}^4 (2(n-l)-\delta_{j,1}-\delta_{j,2}) k_{j,l} }{k_{2+i,n}}_q \\
    &\times t^{\sum_{n=1}^N \sum_{j=1}^4 (n+\delta_{j,1})k_{j,n}} t_1^{\sum_{n=1}^N \sum_{j=1}^4 (n-1+\delta_{j,1})k_{j,n}} t_2^{\sum_{n=1}^N (k_{1,n} + k_{4,n})} t_3^{\sum_{n=1}^N (k_{1,n} + k_{3,n})} \\
    &\times x^{d_1 - \sum_{n=1}^N \sum_{j=1}^4 k_{j,n}} y^{2 d_1  + \sum_{n=1}^N (2n-2) (k_{1,n} + k_{2,n}) + \sum_{n=1}^N (2n-3) (k_{3,n}+k_{4,n})}
\end{split}
\end{equation}
for some $k_{j,n}\geq 0$ where we set $N=d\cdot D_2+1$. Indeed, suppose that after passing the first $(n-1)$ 4-tuples of walls $\beta_2$ carries the monomial $a_{n-1} x^{b_{n-1}} y^{c_{n-1}}$. Then crossing wall $\circleed{4}$ of the $n$th 4-tuple displayed in \Cref{fig:walltuple} we see that by the defining property \labelcref{item:qblproperty3} of a quantum broken line and formula \eqref{eq:qbinomialthm} the monomial carried by the quantum broken line must now be
\begin{equation*}
    a_{n-1} x^{b_{n-1}} y^{c_{n-1}} \stackrel{\circleed{4}}{\longmapsto} a_{n-1} \qbinom{(2n-3)b_{n-1}+c_{n-1}}{k_{4,n}}_q t^{n k_{4,n}} t_1^{(n-1)k_{4,n}} t_2^{k_{4,n}} x^{b_{n-1}-k_{4,n}} y^{c_{n-1}+(2n-3) k_{4,n}}
\end{equation*}
for some $k_{4,n}\geq 0$. Similarly, after passing the next three walls, which together with $\circleed{4}$ form the $n$th 4-tuple of walls, the quantum broken line carries the monomial
\begin{equation}
\begin{split}
    a_{n-1} & \qbinom{(2n-3)b_{n-1}+c_{n-1}}{k_{4,n}}_q t^{n k_{4,n}} t_1^{(n-1)k_{4,n}} t_2^{k_{4,n}} x^{b_{n-1}-k_{4,n}} y^{c_{n-1}+(2n-3) k_{4,n}} \\[1.5em]
    & \stackrel{\circleed{3}}{\longmapsto} a_{n-1} \qbinom{(2n-3)b_{n-1}+c_{n-1}}{k_{3,n}}_q \qbinom{(2n-3)b_{n-1}+c_{n-1}}{k_{4,n}}_q \\
    & \hspace{3em} \times t^{n (k_{3,n}+k_{4,n})} t_1^{(n-1)(k_{3,n}+k_{4,n})} t_2^{k_{4,n}} t_3^{k_{3,n}}    x^{b_{n-1}-k_{3,n}-k_{4,n}} y^{c_{n-1}+(2n-3) (k_{3,n}+k_{4,n})} \\[1.5em]
    & \stackrel{\circleed{2}}{\longmapsto} a_{n-1} \qbinom{(2n-2)b_{n-1}+c_{n-1} + (k_{3,n}+k_{4,n}) }{k_{2,n}}_q  \\
    & \hspace{3em} \times \qbinom{(2n-3)b_{n-1}+c_{n-1}}{k_{3,n}}_q \qbinom{(2n-3)b_{n-1}+c_{n-1}}{k_{4,n}}_q \\
    & \hspace{3em} \times t^{\sum_{j=2}^4 n k_{j,n}} t_1^{\sum_{j=2}^4(n-1) k_{j,n}} t_2^{k_{4,n}} t_3^{k_{3,n}}    x^{b_{n-1}-\sum_{j=2}^4 k_{j,n}} y^{c_{n-1}+(2n-2) k_{2,n} + (2n-3) (k_{3,n}+k_{4,n})} \\[1.5em]
    & \stackrel{\circleed{1}}{\longmapsto} a_{n-1} \qbinom{(2n-2)b_{n-1}+c_{n-1} + (k_{3,n}+k_{4,n}) }{k_{1,n}}_q \qbinom{(2n-2)b_{n-1}+c_{n-1} + (k_{3,n}+k_{4,n}) }{k_{2,n}}_q  \\
    & \hspace{3em} \times \qbinom{(2n-3)b_{n-1}+c_{n-1}}{k_{3,n}}_q \qbinom{(2n-3)b_{n-1}+c_{n-1}}{k_{4,n}}_q \\
    & \hspace{3em} \times t^{\sum_{j=1}^4 (n+\delta_{j,1}) k_{j,n}} t_1^{\sum_{j=1}^4(n-1 + \delta_{j,1}) k_{j,n}} t_2^{k_{1,n}+k_{4,n}} t_3^{k_{1,n}+k_{3,n}}  \\
    & \hspace{3em} \times  x^{b_{n-1}-\sum_{j=1}^4 k_{j,n}} y^{c_{n-1}+(2n-2) (k_{1,n} + k_{2,n}) + (2n-3) (k_{3,n}+k_{4,n})}\,. \label{eq:result_nth_4tuple}
\end{split}
\end{equation}
Since we know that $(b_0,c_0) = m_1 = (d_1,2d_1)$, by induction we can deduce that
\begin{equation*}
    b_n = d_1 - \sum_{l=1}^n \sum_{j=1}^4 k_{j,l} \,, \qquad c_n = 2d_1 + \sum_{l=1}^n (2l-2)(k_{1,l} + k_{2,l}) + \sum_{l=1}^n (2l-3)(k_{3,l} + k_{4,l})\,.
\end{equation*}
Substituting this into the binomial coefficients in the last line of \eqref{eq:result_nth_4tuple} and noting that
\begin{equation*}
    (2n-3)b_{n-1}+c_{n-1} = (2n-1) d_1 - \sum_{l=1}^{n-1} (2(n-l)-1) (k_{1,l} + k_{2,l}) - \sum_{l=1}^{n-1} 2(n-l) (k_{3,l} + k_{4,l})
\end{equation*}
and
\begin{equation*}
    (2n-2)b_{n-1}+c_{n-1} + (k_{3,n}+k_{4,n}) =  2n d_1 - \sum_{l=1}^n 2(n-l) (k_{1,l} + k_{2,l}) - \sum_{l=1}^n (2(n-l)-1) (k_{3,l} + k_{4,l})
\end{equation*}
we indeed see that the end-monomial $a_{\beta_1,\mr{end}}z^{m_{\beta_1,\mr{end}}}$ is of the form \eqref{eq:endmonbeta1}.\\
Now by definition, $C^{\mf{D}}_{p;\,m_1,m_2}(q)$ is the sum of  $a_{\beta_1,\mr{end}}a_{\beta_2,\mr{end}}$ over all quantum broken lines $\beta_i$, $i\in\{1,2\}$, with ends $(p,m_i)$ such that
\begin{equation}
    \label{eq:endexpcondition}
    m_{\beta_1,\mr{end}} + m_{\beta_2,\mr{end}} = 0\,.
\end{equation}
Since by \Cref{lem:qblcharacterisation} $\beta_2$ must be a straight line, we have $m_{\beta_2,\mr{end}} = (d\cdot D_2)\rho_{D_2} = (0,-d_2-d_3)$ and thus \eqref{eq:endexpcondition} translates into the conditions
		\begin{align}
			d_1 &= \sum_{n=1}^{N} \sum_{i=1}^4 k_{i,n}, \label{eq:cond11}\\
			d_2 + d_3 &= \sum_{n=1}^{N} \big(2n(k_{1,n}+k_{2,n}) + (2n-1)(k_{3,n}+k_{4,n})\big)\,. \label{eq:cond12}
		\end{align}
Hence, plugging $a_{\beta_2,\mr{end}}=1$ and \eqref{eq:endmonbeta1} into the defining expression \eqref{eq:sumqbl} for $C^{\mf{D}}_{p;\,m_1,m_2}(q)$ we get
\begin{equation*}
    \begin{split}
    & C^{\mf{D}}_{p;\,m_1,m_2}(q) = \\
    & \quad \sum_{\substack{\forall (i,n)\in\{1,2,3,4\}\times\{1,\ldots,N\}: \, k_{i,n}\geq 0 \\ d_1 = \sum_{n = 1}^N \sum_{i=1}^4 k_{i,n} \\ d_2 + d_3 = \sum_{n=1}^{N} \sum_{i=1}^4 (2n-\delta_{i,3}-\delta_{i,4})k_{i,n}}}  \prod_{n=1}^N \prod_{i=1}^2  \qbinom{2n d_1 - \sum_{l=1}^n \sum_{j=1}^4 (2(n-l)-\delta_{j,3}-\delta_{j,4})k_{j,l} }{k_{i,n}}_q   \\[-2em]
    &\quad\hspace{15.2em}\times \qbinom{(2n-1) d_1 - \sum_{l=1}^{n-1} \sum_{j=1}^4 (2(n-l)-\delta_{j,1}-\delta_{j,2}) k_{j,l} }{k_{2+i,n}}_q \\
    &\quad\hspace{13em}\times t^{\sum_{n=1}^N \sum_{j=1}^4 (n+\delta_{j,1})k_{j,n}} t_1^{\sum_{n=1}^N \sum_{j=1}^4 (n-1+\delta_{j,1})k_{j,n}} \\
    &\quad\hspace{13em} \times t_2^{\sum_{n=1}^N (k_{1,n} + k_{4,n})} t_3^{\sum_{n=1}^N (k_{1,n} + k_{3,n})}\,.
    \end{split}
\end{equation*}
Using the sum conditions \eqref{eq:cond11} and \eqref{eq:cond12} we can simplify the arguments of the $q$-binomials to bring the sum into the form
\begin{equation}
    \label{eq:qblsumresult}
    \begin{split}
    & C^{\mf{D}}_{p;\,m_1,m_2}(q) = \\
    & \quad \sum_{\substack{\forall (i,n)\in\{1,2,3,4\}\times\{1,\ldots,N\}: \, k_{i,n}\geq 0 \\ d_1 = \sum_{n = 1}^N \sum_{i=1}^4 k_{i,n} \\ d_2 + d_3 = \sum_{n=1}^{N} \sum_{i=1}^4 (2n-\delta_{i,3}-\delta_{i,4})k_{i,n}}}  \prod_{n=1}^N \prod_{i=1}^2  \qbinom{d_2 + d_3 - \sum_{m=1}^{N-n} \sum_{j=1}^4 (2m-\delta_{j,3}-\delta_{j,4})k_{j,n+m} }{k_{i,n}}_q   \\[-2em]
    &\quad\hspace{15.2em}\times \qbinom{d_2 + d_3 - \sum_{m=0}^{N-n} \sum_{j=1}^4 (2m-\delta_{j,1}-\delta_{j,2}) k_{j,n+m} }{k_{2+i,n}}_q \\
    &\quad\hspace{13em}\times t^{\sum_{n=1}^N \sum_{j=1}^4 (n+\delta_{j,1})k_{j,n}} t_1^{\sum_{n=1}^N \sum_{j=1}^4 (n-1+\delta_{j,1})k_{j,n}} \\
    &\quad\hspace{13em} \times t_2^{\sum_{n=1}^N (k_{1,n} + k_{4,n})} t_3^{\sum_{n=1}^N (k_{1,n} + k_{3,n})}\,.
    \end{split}
\end{equation}
We can now apply \Cref{prop:logGWfromQBL} which states that $\mathbb{N}^{\rm log}_d\big(\mathrm{dP}_3(0,2)\big)$ is the coefficient of
		\begin{equation}
			\label{eq:cond2monomial}
		    t^{d \cdot \phi_* \mc{E}} \prod_{i=1}^3 t_i^{d \cdot \phi_* \mc{E}_i} = t^{d_0} t_1^{d_0-d_1} t_2^{d_0-d_2} t_3^{d_0-d_3}\,.
		\end{equation}
  in \eqref{eq:qblsumresult}. Here, $\phi$ is the sequence of corner blow-ups $\widetilde{\mathrm{dP}_3}(\widetilde{D})\rightarrow \mathrm{dP}_3(D)$ and $\mc{E},\mc{E}_1,\mc{E}_2,\mc{E}_3$ are the exceptional curves we introduced in \Cref{sec:tormod}. The above identity follows from the fact that $[\phi_* \mc{E}]=H$ and $[\phi_* \mc{E}_i]=E_i$. Now picking the coefficient of \eqref{eq:cond2monomial} in  \eqref{eq:qblsumresult} we obtain that $\mathbb{N}^{\rm log}_d\big(\mathrm{dP}_3(0,2)\big)$ is the sum of 
		\begin{equation}
			\begin{split}
            \label{eq:summandNlog}
				& \prod_{n\geq 1} \prod_{i=1}^{2} \qbinom{d_2 + d_3 - \sum_{m = 1}^{N-n} \big(2m (k_{1,n+m} + k_{2,n+m}) + (2m-1)(k_{3,n+m} + k_{4,n+m})\big)}{k_{i,n}}_q \\
				& \hspace{0.78cm} \times\qbinom{d_2 + d_3 - \sum_{m = 0}^{N-n}\big((2m+1)(k_{1,n+m} + k_{2,n+m}) + 2m(k_{3,n+m} + k_{4,n+m})\big)}{k_{2+i,n}}_q
			\end{split}
		\end{equation}
  over $k_{j,n}\geq 0$, $j\in\{1,\ldots,4\}$ and $n\in \{ 1,\ldots,N\}$ subject to the conditions \eqref{eq:cond11}, \eqref{eq:cond12} and
		\begin{align}
			d_0 &= \sum_{n = 1}^N \bigg(k_{1,n} + n \sum_{i=1}^4 k_{i,n} \bigg)\,, \label{eq:cond21}\\
			d_0 - d_1 &= \sum_{n = 1}^N \bigg(k_{1,n} + (n-1) \sum_{i=1}^4 k_{i,n} \bigg)\,, \label{eq:cond22}\\
			d_0 - d_2 &= \sum_{n = 1}^N (k_{1,n} + k_{4,n})\,, \label{eq:cond23}\\
			d_0 - d_3 &= \sum_{n = 1}^N (k_{1,n} + k_{3,n}) \,.\label{eq:cond24}
		\end{align}
	Notice here that \eqref{eq:cond21}, \eqref{eq:cond23}, and \eqref{eq:cond24} together imply \eqref{eq:cond12}, and that \eqref{eq:cond22} follows from subtracting \eqref{eq:cond11} from \eqref{eq:cond21}. Hence, it is sufficient to impose conditions \eqref{eq:cond11}, \eqref{eq:cond21}, \eqref{eq:cond23}, and \eqref{eq:cond24} only. Note that these constraints are exactly the ones appearing in the sum in \eqref{eq:NlogdP302scat}. Moreover, \eqref{eq:summandNlog} exactly matches the summand on the right-hand side of \eqref{eq:NlogdP302scat} and hence the claimed formula is proven.

	\end{proof}
	
		\begin{rmk}
		It is easy to convince oneself that there are actually only finitely many summands contributing to \eqref{eq:NlogdP302scat}, due to the finite number of choices $(k_{i,n})$ satisfying the summation conditions. Moreover, the product in each of these summands is well-defined since the first sum condition forces $k_{i,n}=0$ for all $n>d_0$. Thus, only a finite number of binomials can be different from one and therefore the whole expression becomes well-defined.
	\end{rmk}
 
			\newcommand{\Gfct}{G}
	\newcommand{\vara}{a}
	\newcommand{\varb}{b}
	\newcommand{\varc}{c}
	\newcommand{\vard}{d}
	\newcommand{\vare}{e}

	To prove that the right-hand side of \eqref{eq:NlogdP302scat} returns \eqref{eq:NlogdP302closedform}, it will be helpful to consider 
	a 1-parameter deformation of  \eqref{eq:NlogdP302scat}, as follows.
 	For non-negative integers $\vara,\varb,\varc,\vard,\vare$, we write
		\begin{equation}
			\label{eq:Gdef}
			\begin{split}
				&\Gfct(\vara,\varb,\varc,\vard,\vare) \coloneqq \\
				&\quad \sum_{\substack{\forall (i,n)\in\{1,2,3,4\}\times\Z_{>0}: \, k_{i,n}\geq 0 \\ \vara = \sum_{n\geq 1} \sum_{i=1}^4 (n+\delta_{i,1}) k_{i,n} \\ \varb = \sum_{n\geq 1} \sum_{i=1}^4  k_{i,n} \\ \varc = \sum_{n\geq 1} (k_{1,n}+k_{4,n})  \\ \vard = \sum_{n\geq 1} (k_{1,n}+k_{3,n})}} ~ \prod_{n\geq 1} \prod_{i=1}^{2} \qbinom{\vare - \sum_{m \geq 1} \sum_{j=1}^4 (2m-\delta_{j,3}-\delta_{j,4}) k_{j,n+m}}{k_{i,n}}_q \\[-1.1cm]
				&\hspace{13.7em} \times\qbinom{\vare - \sum_{m \geq 0}  \sum_{j=1}^4 (2m+\delta_{j,1}+\delta_{j,2}) k_{j,n+m}}{k_{2+i,n}}_q.
			\end{split} 
		\end{equation}

    By \eqref{eq:NlogdP302scat}, our sought-for log~Gromov--Witten generating function is obtained from \eqref{eq:Gdef} via the restriction

	\begin{equation}
		\mathbb{N}^{\rm log}_d\big(\mathrm{dP}_3(0,2)\big) = \Gfct(d_0,d_1,d_0-d_2,d_0-d_3,d_2+d_3)\,.
		\label{eq:Nlogred}
	\end{equation}
 
   We claim that \eqref{eq:Gdef} has a simple closed-form summation, as follows.

  	\begin{prop}

    For all $a,b,c,d,e \in \bbZ_{\geq 0}$ we have that
		\begin{equation}
			\label{eq:Gclosedform}
			\begin{split}
				\Gfct(\vara,\varb,\varc,\vard,\vare) = &\qbinom{b-\vara+e}{b-c}_q \qbinom{c-\vara+d+e}{c}_q  \qbinom{\vara-c}{d}_q \qbinom{\vara-d}{b-d}_q \\
				& \quad - \qbinom{b-\vara+e-1}{b-c}_q \qbinom{c-\vara+d+e-1}{c}_q  \qbinom{\vara-c-1}{d}_q \qbinom{\vara-d-1}{b-d}_q.
			\end{split}
		\end{equation}
In particular, from \eqref{eq:Nlogred}, it follows that
\beq  
\mathbb{N}^{\rm log}_d\big(\mathrm{dP}_3(0,2)\big)(\hbar) = \frac{[d_1]_q [d_2+d_3]_q}{[d_0]_q [d_1+d_2+d_3-d_0]_q}\qbinom{d_3}{d_0-d_1}_q 
\qbinom{d_3}{d_0-d_2}_q
\qbinom{d_0}{d_3}_q
\qbinom{d_1+d_2+d_3-d_0}{d_3}_q \,.
\eeq
		\label{prop:Gclosedform}
	\end{prop}


 We will give an inductive proof of \cref{prop:Gclosedform} in the next Section by seeking a suitable 
 recursive relation in the parameter $a$, broadly following the lead of \cite{Kra21}. The proof is composed of three main steps:
 \ben
 \item We first establish \eqref{eq:Gclosedform} for the base cases $a=0$ and $b=0$ by a direct analysis of the quantum broken line sum in \eqref{eq:Gdef}, which in these cases is either vanishing, or reduces to a single summand (\cref{lem:Gab0}).
 \item We then establish, for $a,b>0$, a difference equation satisfied by $G(a,b,c,d,e)$ in the parameters $a$, $b$, $c$ and $d$. This equation recursively and uniquely determines $G(a,b,c,d,e)$ from the knowledge of the base case $G(a',b',c',d',e)$ with $0\leq a'<a$ (\cref{lem:Grec}).
 \item We finally check that the r.h.s. of \eqref{eq:Gclosedform} is also a solution of the difference equation, and conclude by uniqueness (\cref{lem:Gtilderec}). The proof is fundamentally based on a classical $q$-hypergeometric summation result in the form of the $q$-Pfaff--Saalsch\"utz formula (\cref{thm:qPS}).
 \een 

	\subsubsection{Proof of \cref{prop:Gclosedform}.}
	\label{sec:recrel}

 We start by considering the base cases $\vara=0$ and $\varb=0$ in the induction procedure.
 
 \begin{lem}
     		We have that
\beq    
\label{eq:Gab0}
       \Gfct(0,\varb,\varc,\vard,\vare) =\delta_{\varb,0}\delta_{\varc,0}\delta_{\vard,0}\,,  \quad
       \Gfct(\vara,0,\varc,\vard,\vare)= \delta_{\vara,0}\delta_{\varc,0}\delta_{\vard,0}\,.   
\eeq
In particular, \cref{prop:Gclosedform} holds when $\vara=0$ or $\varb=0$.
\label{lem:Gab0}
 \end{lem}

 \begin{proof}
Both equalities are implied by the conditions on the range of summation of \eqref{eq:Gdef}: setting $\vara=0$ or $\varb=0$ implies that $k_{i,n}=0$ for all $(i,n) \in \{1,2,3,4\} \times \bbZ_{>0}$. In this case the r.h.s. of \eqref{eq:Gdef} is equal to one for $c=d=0$, while if either $c\neq 0$ or $d \neq 0$ it vanishes being an empty sum.

To see that \eqref{eq:Gab0} agrees with \eqref{eq:Gclosedform}, note that when $\vara=0$ the second summand in \eqref{eq:Gclosedform} must always vanish, while the first one is non-zero only if $\varb=\varc=\vard=0$, in which case it is equal to $1$. Likewise, when $\varb=0$, the right-hand side of \eqref{eq:Gclosedform} can only be non-zero if $\varc=\vard=0$: this is equal to $1$ for $\vara=0$ by the previous analysis, and for $\vara>0$ the contributions from the two summands cancel each other, since
		\begin{equation*}
			\begin{split}
				&\qbinom{\vare-\vara}{-\varc}_q \qbinom{\varc-\vara+\vard+\vare}{\varc}_q  \qbinom{\vara-\varc}{\vard}_q \qbinom{\vara-\vard}{-\vard}_q \\
				&\quad - \qbinom{\vare-\vara-1}{-\varc}_q \qbinom{\varc-\vara+\vard+\vare-1}{\varc}_q  \qbinom{\vara-\varc-1}{\vard}_q \qbinom{\vara-\vard-1}{-\vard}_q 
				= \delta_{\varc,0} \, \delta_{\vard,0} - \delta_{\varc,0} \, \delta_{\vard,0} = 0\,.
			\end{split}
		\end{equation*}
 \end{proof}

\begin{lem}
For $\vara,\varb \in \bbZ_{>0}$, $\varc,\vard, \vare \in \bbZ_{\geq 0}$, 		$\Gfct(\vara,\varb,\varc,\vard,\vare)$ satisfies the following recursion in $(a,b,c,d)$:
		\begin{equation}
			\begin{split}
				&\Gfct(\vara,\varb,\varc,\vard,\vare) = \\
				&\quad \sum_{k_{1,1},k_{2,1},k_{3,1},k_{4,1}\geq 0} \prod_{i=1}^{2}\qbinom{\vare-2\vara+2\varb+\varc+\vard-k_{3,1}-k_{4,1}}{k_{i,1}}_q \qbinom{\vare-2\vara+\varb+\varc+\vard}{k_{i+2,1}}_q  \\[0.3cm]
				& \hspace{3cm} \times \Gfct(\vara-\varb-k_{1,1} ,\varb-k_{1,1}-k_{2,1}-k_{3,1}-k_{4,1} ,\varc-k_{1,1}-k_{4,1} ,\vard-k_{1,1}-k_{3,1} ,\vare)\,.
			\end{split}
			\label{eq:Grec}
		\end{equation}
    
    \label{lem:Grec}
    
\end{lem}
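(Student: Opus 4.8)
The plan is to establish \eqref{eq:Grec} by peeling off the first tuple $(k_{1,1},k_{2,1},k_{3,1},k_{4,1})$ from each configuration contributing to $G(a,b,c,d,e)$ and reindexing the remaining variables. Given a configuration $(k_{i,n})_{i\in\{1,2,3,4\},\,n\geq1}$, set $k'_{i,n}\coloneqq k_{i,n+1}$ for $n\geq1$, so that the tail $n\geq2$ is carried onto a fresh configuration indexed by $n\geq1$. The first step is purely linear: separating the $n=1$ block from the tail in each of the four constraints of \eqref{eq:Gdef}, and eliminating the spurious $b$-term generated by the shift $n\mapsto n+1$ in the $a$-constraint (using $b'=b-\sum_i k_{i,1}$), one finds that the statistics $a',b',c',d'$ of $(k'_{i,n})$ obey
\[
a'=a-b-k_{1,1},\quad b'=b-k_{1,1}-k_{2,1}-k_{3,1}-k_{4,1},\quad c'=c-k_{1,1}-k_{4,1},\quad d'=d-k_{1,1}-k_{3,1}.
\]
These are exactly the four shifted arguments of $G$ on the right of \eqref{eq:Grec}. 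Since $b\geq1$, one has $a'\leq a-1<a$, so the recursion genuinely lowers the first argument, which is what will make it usable by induction on $a$.

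Next I would verify that the product in \eqref{eq:Gdef} factorizes across this splitting. Denote by $A_n$ and $B_n$ the two upper arguments, so that the $n$th factor reads $\prod_{i=1}^2\qbinom{A_n}{k_{i,n}}_q\qbinom{B_n}{k_{2+i,n}}_q$. For the tail there is nothing to do: since $A_n$ (resp.\ $B_n$) involves only the variables $k_{\bullet,n+m}$ with $m\geq1$ (resp.\ $m\geq0$), one reads off from the definitions that $A'_n=A_{n+1}$ and $B'_n=B_{n+1}$ for all $n\geq1$, whence
\[
\prod_{n\geq2}\prod_{i=1}^2\qbinom{A_n}{k_{i,n}}_q\qbinom{B_n}{k_{2+i,n}}_q=\prod_{n\geq1}\prod_{i=1}^2\qbinom{A'_n}{k'_{i,n}}_q\qbinom{B'_n}{k'_{2+i,n}}_q,
\]
which is precisely the product defining $G(a',b',c',d',e)$, the parameter $e$ being inert under the shift.

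The one delicate point, and the main obstacle, is the $n=1$ factor. As written, $A_1$ and $B_1$ depend on the entire tail, i.e.\ on the variables being summed in the inner sum, so it is not a priori clear that this factor can be extracted. The resolution is a pair of \emph{termwise} linear identities among the statistics. Setting $S\coloneqq\sum_{n\geq1}\big(2n(k_{1,n}+k_{2,n})+(2n-1)(k_{3,n}+k_{4,n})\big)$ and $T\coloneqq\sum_{n\geq1}\big((2n+1)(k_{1,n}+k_{2,n})+2n(k_{3,n}+k_{4,n})\big)$, a coefficient-by-coefficient check yields
\[
S=2a-c-d,\qquad T=S+b,
\]
valid for every configuration. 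Now $A_1=e-S'$ and $B_1=e-(k_{1,1}+k_{2,1})-T'$, where $S',T'$ are the same functionals evaluated on $(k'_{i,n})$; applying $S'=2a'-c'-d'$ and $T'=S'+b'$ and then the shift formulas above eliminates all tail variables and leaves
\[
A_1=e-2a+2b+c+d-k_{3,1}-k_{4,1},\qquad B_1=e-2a+b+c+d,
\]
which depend only on the fixed outer data $a,b,c,d,e$ and on the first tuple. These are exactly the upper arguments of the prefactor in \eqref{eq:Grec}.

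With $A_1,B_1$ so decoupled, the $n=1$ factor is constant along the inner sum, and the proof concludes by splitting the summation. Every contributing configuration decomposes uniquely into a first tuple $(k_{1,1},\dots,k_{4,1})$ and a tail $(k'_{i,n})_{n\geq1}$ whose statistics are pinned to $a',b',c',d'$; since $a<\infty$ forces $k_{i,n}=0$ for $n>a$, all sums are finite and may be freely reordered. Summing first over the tail at fixed first tuple, and then over the first tuple, gives
\[
G(a,b,c,d,e)=\sum_{k_{1,1},\dots,k_{4,1}\geq0}\Bigg(\prod_{i=1}^2\qbinom{A_1}{k_{i,1}}_q\qbinom{B_1}{k_{2+i,1}}_q\Bigg)\,G(a',b',c',d',e),
\]
which is \eqref{eq:Grec}. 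The hypotheses $a,b>0$ and $c,d\geq0$ serve only to keep the argument away from degenerate base configurations, so that the peeling is non-vacuous and the displayed identity is a bona fide inductive step; tuples for which a shifted argument becomes negative simply contribute $G=0$ and cause no harm.
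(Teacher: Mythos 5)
Your proposal is correct and follows essentially the same route as the paper's proof: peel off the $n=1$ block, reindex the tail, and use the linear sum conditions (your identities $S=2\vara-\varc-\vard$, $T=S+\varb$ are exactly the two displayed identities in the paper) to decouple the upper arguments of the first binomial factors from the tail variables. Your write-up is in fact more explicit than the paper's about why the $n=1$ factor can be pulled out of the inner sum, which is the only genuinely delicate point.
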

	
	\begin{proof}

	    Looking at the defining equation \eqref{eq:Gdef} for $G(a,b,c,d,e)$, we see that the sum conditions imply that
		\begin{equation*}
			\sum_{m\geq 1} \big(2m (k_{1,1+m} + k_{2,1+m}) + (2m-1) (k_{3,1+m} + k_{4,1+m})\big) = 2\vara - 2\varb - \varc - \vard + k_{3,1} + k_{4,1}
		\end{equation*}
		and
		\begin{equation*}
			\sum_{m\geq 0} \big((2m+1) (k_{1,1+m} + k_{2,1+m}) + 2m (k_{3,1+m} + k_{4,1+m})\big) = 2\vara - \varb - \varc - \vard\,.
		\end{equation*}
		Hence, they allow us to rewrite the four $q$-binomial coefficients that correspond to the factor $n=1$ in the product over $n\geq 1$ in \eqref{eq:Gdef} as
		\begin{equation*}
		\begin{split}
		    \prod_{i=1}^{2} &\qbinom{\vare - \sum_{m \geq 1} \big(2m (k_{1,1+m} + k_{2,1+m}) + (2m-1) (k_{3,1+m} + k_{4,1+m})\big)}{k_{i,n}}_q \\
			\times&\qbinom{\vare - \sum_{m \geq 0} \big((2m+1) (k_{1,1+m} + k_{2,1+m}) + 2m (k_{3,1+m} + k_{4,1+m})\big)}{k_{2+i,n}}_q\\
			&\hspace{7em} = \prod_{i=1}^{2}\qbinom{\vare-2\vara+2\varb+\varc+\vard-k_{3,1}-k_{4,1}}{k_{i,1}}_q \qbinom{\vare-2\vara+\varb+\varc+\vard}{k_{i+2,1}}_q \,.
		\end{split}
		\end{equation*}
		This factor is independent of all $k_{i,n}$ with $n>1$ and hence in \eqref{eq:Gdef} we can factor it out of the sum  over these integers which gives us
		\begin{equation}
		\label{eq:Gexpansion}
			\begin{split}
				&\Gfct(\vara,\varb,\varc,\vard,\vare) = \\
				&\quad \sum_{k_{1,1},k_{2,1},k_{3,1},k_{4,1}\geq 0} \prod_{i=1}^{2}\qbinom{\vare-2\vara+2\varb+\varc+\vard-k_{3,1}-k_{4,1}}{k_{i,1}}_q \qbinom{\vare-2\vara+\varb+\varc+\vard}{k_{i+2,1}}_q  \\[0.3cm]
				& \hspace{4em} \times\!\!\!\!\!\!\sum_{\substack{\forall (i,n)\in\{1,2,3,4\}\times\Z_{>1}: \, k_{i,n} \geq 0 \\ \vara = \sum_{n\geq 1} \sum_{i=1}^4 (n+\delta_{i,1}) k_{i,n} \\ \varb = \sum_{n\geq 1} \sum_{i=1}^4  k_{i,n} \\ \varc = \sum_{n\geq 1} (k_{1,n}+k_{4,n})  \\ \vard = \sum_{n\geq 1} (k_{1,n}+k_{3,n})}} ~ \prod_{n\geq 2} \prod_{i=1}^{2} \qbinom{\vare - \sum_{m \geq 1} \sum_{j=1}^4 (2m-\delta_{j,3}-\delta_{j,4}) k_{j,n+m}}{k_{i,n}}_q \\[-1.1cm]
				&\hspace{16.8em} \times\qbinom{\vare - \sum_{m \geq 0}  \sum_{j=1}^4 (2m+\delta_{j,1}+\delta_{j,2}) k_{j,n+m}}{k_{2+i,n}}_q.
			\end{split} 
		\end{equation}
		Now using \eqref{eq:Gdef} again in order to explicitly write out
		\begin{equation}
		\label{eq:Gshiftedexpanded}
		    \begin{split}
		        &\Gfct(\vara-\varb-k_{1,1} ,\varb-k_{1,1}-k_{2,1}-k_{3,1}-k_{4,1} ,\varc-k_{1,1}-k_{4,1} ,\vard-k_{1,1}-k_{3,1} ,\vare) = \\
		        &\quad \sum_{\substack{\forall (i,n)\in\{1,2,3,4\}\times\Z_{>1}: \, k_{i,n}\geq 0 \\ \vara-\varb-k_{1,1} = \sum_{n\geq 2} \sum_{i=1}^4 (n-1+\delta_{i,1}) k_{i,n} \\ \varb-k_{1,1}-k_{2,1}-k_{3,1}-k_{4,1} = \sum_{n\geq 2} \sum_{i=1}^4  k_{i,n} \\ \varc-k_{1,1}-k_{4,1} = \sum_{n\geq 2} (k_{1,n}+k_{4,n})  \\ \vard-k_{1,1}-k_{3,1} = \sum_{n\geq 2} (k_{1,n}+k_{3,n})}} ~ \prod_{n\geq 2} \prod_{i=1}^{2} \qbinom{\vare - \sum_{m \geq 1} \sum_{j=1}^4 (2m-\delta_{j,3}-\delta_{j,4}) k_{j,n+m}}{k_{i,n}}_q \\[-1.1cm]
				&\hspace{17.7em} \times\qbinom{\vare - \sum_{m \geq 0}  \sum_{j=1}^4 (2m+\delta_{j,1}+\delta_{j,2}) k_{j,n+m} }{k_{2+i,n}}_q.
		    \end{split}
		\end{equation}
		we notice that the sum conditions in \eqref{eq:Gshiftedexpanded} are actually equivalent to the ones in the third line of \eqref{eq:Gexpansion}. Hence, we can identify \eqref{eq:Gshiftedexpanded} with line three and four of \eqref{eq:Gexpansion} and so we arrive at the recursion formula \eqref{eq:Grec}.

		\end{proof}

    Define now
		\begin{equation}
			\tilde{\Gfct}(\vara,\varb,\varc,\vard,\vare)\coloneqq \qbinom{\varb-\vara+\vare}{\varb-\varc}_q \qbinom{\varc-\vara+\vard+\vare}{\varc}_q  \qbinom{\vara-\varc}{\vard}_q \qbinom{\vara-\vard}{\varb-\vard}_q\,,
			\label{eq:Gtildedef}
		\end{equation}
  		so that the right-hand side of  
		\eqref{eq:Gclosedform} equates to
		$\tilde{\Gfct}(\vara,\varb,\varc,\vard,\vare) - \tilde{\Gfct}(\vara-1,\varb,\varc,\vard,\vare-2)$.
  We shall make extensive use of Jackson's $q$-analogue of the Pfaff--Saalsch\"{u}tz summation in the form it is stated in   \cite[Equation (1q)]{ZeilSaal}.
		\begin{lem}[The $q$-Pfaff--Saalsch\"utz Theorem, \cite{ZeilSaal}]
		\label{thm:qPS}
		 For integers $A,B,C,D\geq 0$ we have
		\begin{equation}
			\sum_{k\geq 0} \frac{[A+B+C+D-k]_q!}{[k]_q!\, [A-k]_q!\, [B-k]_q!\, [C-k]_q!\, [D+k]_q!} = \qbinom{A+B+D}{B}_q \qbinom{A+C+D}{A}_q \qbinom{B+C+D}{C}_q.
			\label{eq:qPS}
		\end{equation}
		\end{lem}

\begin{prop}
Let $a,b \in \bbZ_{>0}$ and $c,d,e \in \bbZ_{\geq 0}$. 		Then  $\tilde{\Gfct}(\vara,\varb,\varc,\vard,\vare)$ satisfies the same recursion \eqref{eq:Grec} as $\Gfct(\vara,\varb,\varc,\vard,\vare)$, i.e.  
		\begin{equation}
			\begin{split}
				&\tilde{\Gfct}(\vara,\varb,\varc,\vard,\vare) = \\
				&\quad\sum_{k_{1},k_{2},k_{3},k_{4}\geq 0} \prod_{i=1}^{2}\qbinom{\vare-2\vara+2\varb+\varc+\vard-k_{3}-k_{4}}{k_{i}}_q \qbinom{\vare-2\vara+\varb+\varc+\vard}{k_{i+2}}_q  \\[0.1cm]
				& \hspace{3cm} \times \tilde{\Gfct}(\vara-\varb-k_{1} ,\varb-k_{1}-k_{2}-k_{3}-k_{4} ,\varc-k_{1}-k_{4} ,\vard-k_{1}-k_{3} ,\vare)\,.
			\end{split}
			\label{eq:Gtilderec}
		\end{equation}
    \label{lem:Gtilderec}
\end{prop}
  

	\begin{proof}
		
	In order to prove \eqref{eq:Gtilderec} we will repeatedly use \cref{thm:qPS}. We start from the right-hand side of \eqref{eq:Gtilderec}, plug in the definition of $\tilde{\Gfct}$ given in \eqref{eq:Gtildedef}, expand the binomials, and collect all factorials involving $k_1$ and $k_2$. Doing so one finds for the r.h.s.\ of \eqref{eq:Gtilderec} that
		\begin{equation*}
			\begin{split}
				&\sum_{k_{3},k_{4}\geq 0} \qbinom{\varb-2\vara + \varc + \vard + \vare}{k_3}_q \qbinom{\varb-2\vara + \varc + \vard + \vare}{k_4}_q\\
				& \quad \times\frac{[\vara - \varb - \vard + k_3]_q! \, [2\varb-2\vara + \varc + \vard + \vare - k_3 - k_4]_q!^2 \, [
					\vara - \varb - \varc + k_4]_q!}{[\varb-\vara + \vard + \vare - k_3]_q! \, [\varb-\vara + \varc + 
					\vare - k_4]_q!} \\[0.8em]
				&\quad \times \sum_{k_{1}\geq 0} \frac{ [\varb-\vara + \varc + \vard + \vare - k_1 - k_3 - 
					k_4]_q!}{ [k_1]_q! \, [\varc - k_1 - k_4]_q! \, [\vard - k_1 - k_3]_q!} \\[-0.2em]
				&\quad \hspace{2cm} \times \frac{1}{[2\varb-2\vara + \varc + \vard + \vare - k_1 - k_3 - k_4]_q! \, [\vara - \varb - \varc - \vard + k_1 + k_3 + k_4]_q!}\\[0.8em]
				&\quad \times \sum_{k_{2}\geq 0} \frac{ [2\varb - \vara + \vare - k_2 - k_3 - k_4]_q!}{ [k_2]_q! \, [\varb - \varc - k_2 - k_3]_q! \, [\varb - \vard - k_2 - k_4]_q!}\\[-0.2em]
				&\quad \hspace{2cm} \times \frac{1}{[2\varb-2\vara + \varc + \vard + \vare - k_2 - k_3 - k_4]_q! \, [\vara - 2 \varb + k_2 + k_3 + k_4]_q!}\,.
			\end{split}
		\end{equation*}
		We can now use \eqref{eq:qPS} to perform the sum over $k_1$ and $k_2$. Collecting the factorials depending on $k_3$ and $k_4$ in the resulting expression, we get
		\begin{equation*}
			\begin{split}
				& \frac{[\vara - \varb]_q! \,[\vara - \varc - \vard]_q! \,[\varb-2\vara + \varc + \vard + \vare]_q!^2}{ [\varb-\vara + \vare]_q! \, [\varc-\vara + \vard + \vare]_q!} \\
				& \quad \times \sum_{k_3 \geq 0} \frac{ [\varb-\vara + \vard + \vare - k_3]_q!}{ [k_3]_q! \, [\vard - k_3]_q! \, [
					\varb - \varc - k_3]_q! \,  [\varb-2\vara + \varc + \vard + \vare - k_3]_q! \,  [
					\vara - \varb - \vard + k_3]_q!} \\
				& \quad \times\sum_{k_4 \geq 0} \frac{ [\varb-\vara + \varc + \vare - k_4]_q!}{ [k_4]_q! \, [\varc - k_4]_q! \, [
					\varb - \vard - k_4]_q! \, [\varb-2\vara + \varc + \vard + \vare - k_4]_q! \,  [
					\vara - \varb - \varc + k_4]_q!}\,.
			\end{split}
		\end{equation*}
		Thus, we can use \eqref{eq:qPS} again to carry out the sums over $k_3$ and $k_4$ to deduce that the right-hand side of \eqref{eq:Gtilderec} equals
		\begin{equation*}
			\frac{ [\vara - \varc]_q! \, [\vara - \vard]_q! \, [\varb-\vara + \vare]_q! \, [\varc-\vara + \vard + \vare]_q!}{ [\vara - \varb]_q! \, [\varb - \varc]_q! \, [\varc]_q! \, [\varb - \vard]_q! \, [\vara - \varc - \vard]_q! \, [\vard]_q! \, [\varc-\vara + \vare]_q! \, [\vard - \vara + \vare]_q!}\,.
		\end{equation*}
		The above is exactly the expansion of $\tilde{\Gfct}(\vara,\varb,\varc,\vard,\vare)$ into factorials, proving  \eqref{eq:Gtilderec}.
		
	\end{proof}

 Using \cref{lem:Gtilderec} we can conclude the proof of \cref{prop:Gclosedform}.
 
\begin{proof}[Proof of \cref{prop:Gclosedform}]
By \eqref{eq:Grec}, for $a>0$ and $b>0$, $G(a,b,c,d,e)$ is determined by the value of $G(a',b',c',d',e)$ for $0\leq a<a'$. This means that $G(a,b,c,d,e)$ is the unique solution of \eqref{eq:Grec} compatible with the boundary value in \eqref{eq:Gab0} for $G(0,b,c,d,e)$. By the second part of \cref{lem:Gab0}, the r.h.s. of \eqref{eq:Gclosedform} indeed returns the correct value of $G(0,b,c,d,e)$ and $G(a,0,c,d,e)$ from \eqref{eq:Gdef}, so all that is left to do is to check that it solves \eqref{eq:Grec}, and conclude by uniqueness.

To this aim, it is sufficient to argue using \cref{lem:Gtilderec}: note first of all that the coefficients of the recursion are invariant under the shift $(\vara,\vare) \rightarrow (\vara-1,\vare-2)$. Therefore, since 
		$\tilde{\Gfct}(\vara,\varb,\varc,\vard,\vare)$ is a solution of \eqref{eq:Grec} by \cref{lem:Gtilderec},  then so is
		$\tilde{\Gfct}(\vara-1,\varb,\varc,\vard,\vare-2)$. Furthermore, since \eqref{eq:Grec} is linear in $\Gfct$, their difference is also a solution. But by \eqref{eq:Gtildedef} this is exactly the claimed expression for $\Gfct(\vara,\varb,\varc,\vard,\vare)$ in \eqref{eq:Gclosedform}, which concludes the proof.\\ 
\end{proof}

	\begin{cor}
	\cref{conj:higherg} holds for any quasi-tame pair $Y(D)$.
	\label{cor:proof}
	\end{cor}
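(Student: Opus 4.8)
The plan is to reduce \cref{conj:higherg} for a general quasi-tame pair to the single pair $\mathrm{dP}_3(0,2)$ and then to assemble the closed-form computations already established. First I would dispose of the tame case, where \cref{conj:higherg} is precisely \cite[Thm.~1.5~and~1.7]{Bousseau:2020fus}, so nothing further is required. Every remaining non-tame, quasi-tame pair $Y(D)$ has $l=2$, and by the classification \cite[Prop.~2.2]{Bousseau:2020fus} each such $Y(D)$ is obtained, up to deformation, by a sequence of interior blow-downs of $\mathrm{dP}_3(0,2)$. By \cref{prop:birinv} both generating functions satisfy $\mathbb{N}^{\rm log}_d(Y(D)) = \mathbb{N}^{\rm log}_{\pi^*d}(\mathrm{dP}_3(0,2))$ and $\mathbb{O}_d(Y^{\rm op}(D)) = \mathbb{O}_{\pi^*d}(\mathrm{dP}_3^{\rm op}(0,2))$, where $\pi$ denotes the composite interior blow-up; since $\pi^*d\cdot D'_j = d\cdot D_j$ by the projection formula, the $d$-dependent prefactor on the right-hand side of \eqref{eq:logopengen} is also preserved. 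Hence it suffices to verify both identities of \cref{conj:higherg} for $Y(D)=\mathrm{dP}_3(0,2)$.

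For the first identity, with $l=2$ and $D_l=D_2$, I would begin by recording the intersection numbers on $\mathrm{dP}_3$: writing $d = d_0(H-E_1-E_2-E_3)+\sum_{i=1}^3 d_i E_i$ and using $H^2=1$, $E_i^2=-1$, $H\cdot E_i=0$, one finds $d\cdot D_1 = d_1$ and $d\cdot D_2 = d_2+d_3$. The degenerate cases $d\cdot D_1=0$ or $d\cdot D_2=0$, for which $\mathbb{N}^{\rm log}_d(\mathrm{dP}_3(0,2))=0$, are handled separately and reduce to the genus-zero correspondence, so the substance lies in $d\cdot D_1, d\cdot D_2>0$. There I would substitute the closed form \eqref{eq:NlogdP302closedform} of \cref{prop:NlogdP302closedform} into the right-hand side of
\begin{equation*}
\mathbb{O}_d(\mathrm{dP}_3^{\rm op}(0,2)) = \frac{(-1)^{d\cdot D_1 -1}}{d\cdot D_1}\,\frac{(-1)^{d\cdot D_2-1}}{[d\cdot D_2]_q}\,\mathbb{N}^{\rm log}_d(\mathrm{dP}_3(0,2)).
\end{equation*}
The numerator factor $[d_2+d_3]_q$ of \eqref{eq:NlogdP302closedform} cancels against $[d\cdot D_2]_q=[d_2+d_3]_q$, the factor $[d_1]_q$ combines with $1/d_1$ to give the prefactor $[d_1]_q/d_1$, and the signs recombine as $(-1)^{d_1-1}(-1)^{d_2+d_3-1}=(-1)^{d_1+d_2+d_3}$; the four $q$-binomials are left untouched. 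The outcome is exactly the independently known value \eqref{eq:opendp302} of $\mathbb{O}_d(\mathrm{dP}_3^{\rm op}(0,2))$ from \cite[Sec.~6.3.1]{Bousseau:2020fus}, establishing \eqref{eq:logopengen} for $\mathrm{dP}_3(0,2)$.

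For the second identity, the LMOV integrality and its explicit $q$-series follow by feeding the closed form \eqref{eq:NlogdP302closedform} into the BPS integrality result \cite[Thm.~8.1]{Bousseau:2020fus}, whose argument for $l=2$ applies verbatim to this case. I should emphasise that the genuine difficulty of the whole program does not reside in this corollary: the hard step, namely the $q$-hypergeometric resummation of the infinite-scattering sum \eqref{eq:NlogdP302scat} into the compact expression \eqref{eq:NlogdP302closedform}, has already been carried out in \cref{prop:NlogdP302scat,prop:Gclosedform}. What remains here is the bookkeeping above, whose only delicate point is matching the sign and the $[\cdot]_q$-versus-integer normalisations of the prefactor between \eqref{eq:NlogdP302closedform} and \eqref{eq:opendp302}; once the intersection numbers $d\cdot D_1=d_1$ and $d\cdot D_2=d_2+d_3$ are fixed, this is a direct comparison.
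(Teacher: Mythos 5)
Your proposal is correct and follows essentially the same route as the paper: dispose of the tame case via \cite[Thm.~1.5~and~1.7]{Bousseau:2020fus}, reduce the non-tame quasi-tame case to $\mathrm{dP}_3(0,2)$ via the classification and \cref{prop:birinv}, match \eqref{eq:NlogdP302closedform} against \eqref{eq:opendp302} for the first identity, and invoke \cite[Thm.~8.1]{Bousseau:2020fus} for the BPS integrality. Your explicit verification of the intersection numbers $d\cdot D_1=d_1$, $d\cdot D_2=d_2+d_3$ and of the sign/prefactor bookkeeping is a welcome (and correct) elaboration of what the paper leaves implicit.
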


    \begin{proof}
            
    The tame case having been treated in \cite{Bousseau:2020fus}, it suffices to restrict to non-tame, quasi-tame pairs. Taking the specialisation \eqref{eq:Nlogred} of \eqref{eq:Gclosedform}  leads to  \eqref{eq:NlogdP302closedform}, which together with  \eqref{eq:opendp302} establishes the first equality of \cref{conj:higherg} for $Y(D)=\mathrm{dP}_3(0,2)$. Since $\bbO_d\l(\mathrm{dP}^{\rm op}_3(0,2)\r)=\bbO_d\l(\mathrm{dP}^{\rm op}_3(1,1)\r)$ \cite[Sec.~6.3.1]{Bousseau:2020fus}, the BPS integrality statement in the second equality further follows without any modification from the proof of \cite[Thm.~8.1]{Bousseau:2020fus} for $l=2$. Finally, since every non-tame, quasi-tame pair $Y(D)$ is related to $\mathrm{dP}_3(0,2)$ by a series of $m\geq 0$ iterated interior blow-ups at general points of $D$ \cite[Prop.~2.2]{Bousseau:2020fus}, \cref{prop:birinv} further implies that \cref{conj:higherg} holds for any such pair. 
	
	\end{proof}	
	    

		\bibliography{refs}
	

\end{document}